\pgfplotsset{width=10cm,compat=1.9}
\begin{document}

\title{Sparse PCA: a Geometric Approach}

\author{\name Dimitris Bertsimas \email dbertsim@mit.edu \\
       \name Driss Lahlou Kitane \email driss@mit.edu \\
       \addr Operations Research Center\\
       Massachusetts Institute of Technology\\
       77, Massachusetts Ave.
       Cambridge, MA 02138, USA}

\editor{}

\maketitle

\begin{abstract}
We consider the problem of maximizing the variance explained from a data matrix using orthogonal sparse principal components that have a support of fixed cardinality. While most existing methods focus on building principal components (PCs) iteratively through deflation, we propose GeoSPCA, a novel algorithm to build all PCs at once while satisfying the orthogonality constraints which brings substantial benefits over deflation. This novel approach is based on the left eigenvalues of the covariance matrix which helps circumvent the non-convexity of the problem by approximating the optimal solution using a binary linear optimization problem that can find the optimal solution. The resulting approximation can be used to tackle different versions of the sparse PCA problem including the case in which the principal components share the same support or have disjoint supports and the Structured Sparse PCA problem. We also propose optimality bounds and illustrate the benefits of GeoSPCA in selected real world problems both in terms of explained variance, sparsity and tractability. Improvements vs. the greedy algorithm, which is often at par with state-of-the-art techniques, reaches up to 24\% in terms of variance while solving real world problems with 10,000s of variables and support cardinality of 100s in minutes. We also apply GeoSPCA in a face recognition problem yielding more than 10\% improvement vs. other PCA based technique such as structured sparse PCA.
\end{abstract}
\begin{keywords}Linear Integer Optimization, PCA, Sparse PCA 
\end{keywords}
\section{Introduction}

PCA \citep{pca} is a popular data analysis technique. It is used in a variety of applications including finance, data imputation, image processing and genome analysis. PCA is of particular interest when the data matrix data $\mathbf{X}\in\mathbb{R}^{n\times p}$ has a high dimension $p$. Yet, models resulting from PCA use all the features while sparsity of the PCs can be desired for various benefits. Sparse versions of PCA that use a reduced number of variables to build principal components were proposed to improve interpretability, enhance model's predictive power or reduce operational costs (such as in finance) and investment costs (such as in spectroscopy). Solving the sparse PCA problem is particularly challenging due to the non-convexity of the problem. 
\subsection{Background}
The optimization community has been studying several versions of sparse PCA problem for decades now. Most of the methods proposed fall into two broad categories. One of the categories aims at approximating the whole covariance matrix using sparse principal components; the loss function is typically of the form $||\mathbf{X} - \mathbf{Z}||$ where $\mathbf{X}\in\mathbb{R}^{n\times p}$ is the covariance matrix and $\mathbf{Z}$ is constructed using problem variables. In this category, sparsity is generated using thresholding and/or regularization either directly in the objective function or in the constraints. Thresholding  has been used as early as in \citep{Jeffers}. Limits of thresholding have also been documented \citep{cadima1995loading}, in particular key variables could be zeroed and highly correlated variables chosen together which ultimately could lead to inaccurate interpretations. The first algorithm to use $\ell_1$ penalization is SCOTLASS  \citep{scotlass}. It was introduced then as a method for preserving the orthogonality constraints while sparsity was induced by $\ell_1$ constraints but, using this technique, the number of features used in the resulting sparse model is capped by $n$ and the method is not tractable for $p\geq 100$. Larger problems could be then tackled by the introduction of the LASSO method to generate PCs by adopting an $\ell_1$ penalized regression approach in the generation of the PCs \citep{zou} which, on one hand, allowed the number of variables used in the model to exceed $n$ but, on the other hand, sacrificed the orthogonality of the PCs and provided no guarantee on the optimality of the solution. \citep{SHEN2008, witten} introduced iterative thresholding methods to tackle the sparse PCA problem building PCs iteratively. Other breakthroughs were achieved and successfully implemented including the GPower method using $\ell_1$ penalty which preserves orthogonality \citep{journee}, scales for $p$ in 10,000s and $n$ in 1,000s and outperforms LASSO-based regression approach and greedy algorithm in terms of quality of solution and computation speed. Subsequent works using penalization to induce sparsity adopting optimization over Stiefel manifolds \citep{huang,8807218,chenmani} and the Procrustes reformulation \citep{benidis} showed that solution quality and computation time could be further improved considering the same size of problems while preserving orthogonality. The state of the art in this category of works can then handle large instances in competitive time, delivers orthogonality of the PCs and can furthermore be adapted to variations of the sparse PCA problem including the structured sparse PCA \citep{bach,sspcabio}. Yet, the community is still making sizeable efforts to improve the variance captured. Indeed, most of these approaches come with no guarantee of the optimality of the solution and the control of the number of variables cannot be chosen with precision. Recent efforts in \citep{erichson} introduce $\ell_0$ penalization while keeping a loss function of the form $||\mathbf{X} - \mathbf{Z}||$ which could theoretically achieve sparser principal components and achieve a tighter control over the number of nonzero variables (numerical tests in this paper focusing essentially on $\ell_1$ penalization).
The number of variables used in the first category of works is often too high to enable interpretability or to be practical in several intended uses of sparse PCA. \medbreak 
The second category of approaches aims at having a strict control on the number of variables used. For a given number $k$ of variables, the objective sought is to maximize the variance captured using one or several principal components that have a support of cardinal $k$.  Rather than generating sparsity through penalization, this family of approaches aims at finding optimal or near optimal solutions for a defined number of features to be used. Initial work involved a greedy strategy as well as a branch-and-bound approach \citep{moghaddam}.  Optimal solutions were sought through semidefinite optimization modeling (SDO) \citep{aspremont1}. Further developments of this technique enabled tackling relatively big problems with optimality certificates \citep{aspremont2}. GPower technique was also adapted to propose a truncation approach in \citep{gpowert}. Later, a novel technique that performs particularly well when the the decay of eigenvalue of the data matrix $\mathbf{X}$ was proposed \citep{papailiopoulos13}. Although each sparse PC constructed is optimal, the set of PCs are constructed iteratively using deflation techniques of the data matrix, thus there is no guarantee of the overall optimality and orthogonality is sacrificed. Another track of work develops branch-and-bound approaches to construct optimal solutions while controlling $k$. This enabled solving problems for $p$ in the 1,000s and $k\leq10$ but the PCs are then constructed by iteratively deflating $\mathbf{X}$ \citep{lauren}. More recent work combines branch-and-bound and SDO formulation to solve problems with $p$ in 10,000s and $k\leq 10$ to near optimality in hours \citep{ryan, li2020exact}. Since the PCs are obtained by deflating $\mathbf{X}$, orthogonality is sacrificed. One variation of the sparse PCA tackled by this category of approaches includes additional constraints aiming at building sparse PCs with disjoint supports \citep{texas}. More recently, \citep{delpia} proves that finding orthogonal principal components sharing the same support and maximizing the variance captured is a polynomial problem when the rank of the covariance matrix is fixed. The result is further extended to a special case of the disjoint supports sparse PCA problem when the cardinality of the disjoint supports is identical. This paper is theoretical and does not provide any numerical results so there is no indication regarding how the suggested approach would perform from a practical point of view. \citep{dey} also recently proposed the first algorithm that builds simultaneously orthogonal sparse principal components sharing the same support while upper-bounding the number of nonzero rows. The methods tackles problems for $p$ as high as 2000 in less than two hours and upper and lower bounds are provided for the optimal solution.\medbreak
The two broad categories of approaches presented do not span all of the approaches that the machine learning community designed. Indeed, remarkable approaches aimed for instance at approximating the subspace generated by the principal components of the classic PCA algorithm were developed. \citep{Johnstone} focuses on the principal components that correspond to the largest eigenvalues of the covariance matrix and generates sparsity through thresholding. \citep{ma} further improves the results achieved by this approach by proposing a novel iterative thresholding technique achieving tighter loss than comparable techniques.
\subsection{Motivations and Contributions}
As mentioned in the background section, one of the main approaches to construct sparse principal components that the machine learning community adopted is to add a constraint upper-bounding the number of nonzero variables. This approach is particularly relevant when the desired number of nonzero variables is low compared to the dimension $p$ of the data points. When upper-bounding the number of nonzero variables by an integer $k$ when $k$ is significantly lower, the maximization of the variance captured is more relevant than the minimization of the error $||\mathbf{X} - \mathbf{Z}||$ where $\mathbf{X}\in\mathbb{R}^{n\times p}$ is the data matrix and $Z$ is the constructed sparse matrix as the error $||\mathbf{X} - \mathbf{Z}||$ is too large. Considering this approach, the machine learning community focused on constructing the PCs iteratively which does not guarantee the optimality of the overall solution (and actually yields sub-optimal solutions). Only very recent works \citep{delpia, dey} tackle the problem of building multiple orthogonal principal components at once to guarantee optimality when PCs share a common support \citep{delpia, dey} or when PCs have disjoint supports \citep{delpia}. Our main motivation is to bring a new method that upper-bounds the number of nonzero variables, builds multiple orthogonal principal components at once and scales further than existing methods when PCs share the same support or have block-disjoint supports. Furthermore, we also aim at providing guarantees on the quality of the solution found. \medbreak
In the present paper, we propose a novel approach to sparse PCA that  considers the left eigenvectors of $\mathbf{X}$ (it is worth noting that all works on sparse PCA focus on the right eigenvectors). We derive then a geometrical interpretation of the resulting problem. This interpretation leads to a binary linear formulation that aims at approximating the original problem. This geometrical approach is versatile and can be adapted to several versions of sparse PCA problem. We introduce two formulations; one for a version in which all PCs share the same support and another version in which groups of PCs use disjoint supports (a generalization of the version imposing to PCs to have disjoint supports). Building on the versatility of the method, we also propose a formulation for the structured sparse PCA problem. We provide optimality gap bounds and test the proposed method on real world data sets. The geometrical approach we propose solves problem for $p$ in 10,000s and $k$ in 100s  while preserving the orthogonality and controlling $k$ in minutes. \medbreak
As we mentioned earlier, upper-bounding the number of nonzero variables is a desirable feature for practitioners whether the \textit{true} principal components share the same support or not. While the method we propose could be used in the general case, practitioners might me even more interested in this technique when the data studied follows particular structures (eg. hyperspectral imagery and spectroscopy when the focus is on particular variables \citep{spectro2, spectro1}, computer vision \citep{bach} or matching \citep{benidis} among others).
\medbreak
We summarize our contributions in this paper below:
\begin{itemize}
    \item We introduce a new approach to the sparse PCA problem based on left eigenvectors that leads to a geometrical interpretation of sparse PCA. We approximate the sparse PCA problem using binary linear optimization (BLO) and the introduction of cuts that improve the solution.
    \item We prove that the optimal solution can be found using the approximation proposed in a finite number of steps and provide a theoretical optimality gap to the solution generated by the method we propose.
    \item We propose formulations and algorithms (i) for sparse PCA problem in which all PCs share the same support, (ii) for a generalization of the version of PCA requiring that the PCs have disjoint support, and (iii) for the structured sparse PCA problem.
    \item We test the proposed method on publicly available real world data sets and compare its performance vs. existing methods and show that the geometrical approach produces solutions of higher quality than alternative state-of-the-art methods.
\end{itemize}
The structure of the paper is as follows. In Section 2, we first study the sparse PCA problem when all principal components share the same support. We introduce the geometrical approach and offer interpretations and provide formulations, optimality gap bounds and related algorithm. In Section 3, we extend the geometric approach to the case in which the principal components have disjoint supports and to the structured sparse PCA problem. In Section 4, we compare GeoSPCA to other state-of-the-art sparse PCA techniques and we finally conclude in Section 5.\medbreak
\subsection{Notations and Definitions}
In the remainder of this paper, we define $\mathbf{X}\in\mathbb{R}^{n\times p}$ a centered data matrix with $n$ data points and $p$ features. We refer to the number of features used to build a sparse PCA model by $k$. All matrices and vectors in bold characters. \medbreak 
We note $\mathbf{M}^T$, the transpose of matrix $\mathbf{M}$. Consider the SVD decomposition of a real matrix $\mathbb{R}^{n\times p}$, $\mathbf{U}\mathbf{\Sigma}\mathbf{V}^T$. We call the columns of $\mathbf{U}$ (resp. $\mathbf{V}$) the left (resp. right) eigenvectors of $\mathbf{X}$. Consider $m$ a non-negative integer, we note $[m]=\{1,2,\dots,m\}$. We denote $\mathbf{X}_i$ the $i^{th}$ column of a matrix $\mathbf{X}$. The scalar $s_i$ is the $i^{th}$ component of vector $\boldsymbol s$. The matrix $\mathbf{S} = diag(\mathbf{s})$ is the diagonal matrix with a diagonal equal to the vector $\boldsymbol s$. Consider $\sigma\subset [m]$, we note $\boldsymbol s^\sigma$ the vector of $\{0,1\}^m$ such that $s^\sigma_i = 1$ if $i\in\sigma$ and $s^\sigma_i = 0$, otherwise. If $E$ is a finite set, we note $|E|$ its cardinality. Consider a matrix $\mathbf{M}\in\mathbb{R}^{n\times p}$ and a vector $\boldsymbol s \in \{0,1\}^p$, we note $\mathbf{M}(\mathbf{s})$ the matrix we obtain by suppressing all columns $\mathbf{M}_i$ of $\mathbf{M}$ such that $s_i=0$. $||.||_F$ is the Frobenius norm. $||.||_0$ designates the number of nonzero coefficients of a vector or the number of nonzero rows of a matrix. For $b$ a non-negative integer, $\mathbf{I}_b$ is the identity matrix of size $b \times b$. Given $\boldsymbol s\in \{0,1\}^p$, we note $\mathbf{U}[\mathbf{s}]$ any element of $argmax_{\mathbf{U}\in\mathbb{R}^{a\times n}}\,tr(\mathbf{U}^T\mathbf{X}(\mathbf{s})\mathbf{X}(\mathbf{s})^T\mathbf{U})\,s.t.\,\mathbf{U}^T\mathbf{U}=\mathbf{I}_a$ for a given  non-negative integer $a$ and $\eta(\mathbf{s}) = ||\mathbf{X}(\mathbf{s}) - \mathbf{U}[\mathbf{s}]\mathbf{U}[\mathbf{s}]^T\mathbf{X}(\mathbf{s})||^2_F$, $\mu(\mathbf{s}) = ||\mathbf{X}(\mathbf{s})||_F^2 = \sum_{i=1}^p s_i ||\mathbf{X}_i||^2$ and $\pi(\mathbf{s}) = ||\mathbf{U}[\mathbf{s}]\mathbf{U}[\mathbf{s}]^T\mathbf{X}(\mathbf{s})||^2_F = tr(\mathbf{U}[\mathbf{s}]^T\mathbf{X}(\mathbf{s})\mathbf{X}(\mathbf{s})^T\mathbf{U}[\mathbf{s}])$. It is easy to verify that $\mu(\mathbf{s}) = \pi(\mathbf{s}) + \eta(\mathbf{s})$ (See Figure \ref{fig0}). We define finally the vector $\mathbf{e}\in \mathbb{R}^p$ as the vector $\mathbf{e} = \{1,1,\dots,1\}$.
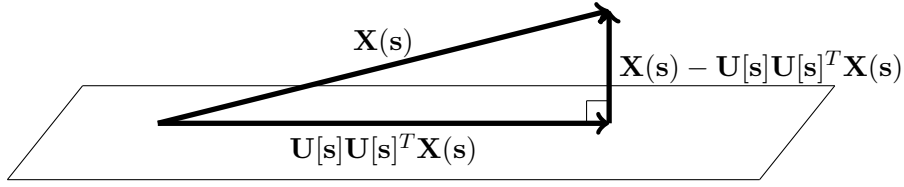
\begin{figure}
\centering
\begin{tikzpicture}
    \draw   (5,-0.25) -- (-5, -0.25);
    \draw   (6,1) -- (-4, 1);
    \draw   (6,1) -- (5, -0.25);
    \draw   (-4,1) -- (-5, -0.25);
    \draw (3,0.8) -- (2.7,0.8);
    \draw (2.7,0.5) -- (2.7,0.8);
    \draw[line width=2pt,black,-stealth][->] (-3,0.5)--(3,2) ;
    \node[align=left, above] at (0,1.25)%
    {$\mathbf{X}(\mathbf{s})$};
    \draw[line width=2pt,black,-stealth][->] (-3,0.5)--(3,0.5);
    \node[below] at (0,0.5) {$\mathbf{U}[\mathbf{s}]\mathbf{U}[\mathbf{s}]^T\mathbf{X}(\mathbf{s})$};
  \draw[line width=2pt,black,-stealth][->] (3,0.5)--(3,2);
    \node[right] at (3,1.25) {$\mathbf{X}(\mathbf{s}) - \mathbf{U}[\mathbf{s}]\mathbf{U}[\mathbf{s}]^T\mathbf{X}(\mathbf{s})$};
\end{tikzpicture}
\caption{$\mathbf{X}(\mathbf{s}) - \mathbf{U}[\mathbf{s}]\mathbf{U}[\mathbf{s}]^T\mathbf{X}(\mathbf{s})$ is the projection of $\mathbf{X}(\mathbf{s})$ on the subspace of lower dimension defined by $\mathbf{U}[\mathbf{s}]$} \label{fig0}
\end{figure}

\section{Sparse PCA problem with Principal components sharing the same support}
In this section, we address the sparse PCA problem in which all the principal components share the same support. We first transform the problem to a left eigenvectors problem and derive a geometric interpretation of the problem. We then introduce a binary linear optimization approximation of the sparse PCA problem. We finally propose an algorithm that solves the approximation and that can find the optimal solution to the original problem. 
\subsection{An exact formulation using the left eigenvectors perspective}
We consider the problem of maximizing the variance explained by a given number $a$ of sparse principal components that share the same support of cardinal less than or equal to $k$. We write the problem as:

\begin{align*}
\tag{1}
    \displaystyle \max_{\mathbf{W}\in \mathbb{R}^{p\times a}} & \,tr(\mathbf{W}^T\mathbf{X}^T\mathbf{X}\mathbf{W})\\
    s.t.&\,||\mathbf{W}||_0 \leq k,\\
    & \mathbf{W}^T\mathbf{W} = \mathbf{I}_a,
\end{align*}
where $||\mathbf{W}||_0$ is the number of nonzero rows of $\mathbf{W}$. The $a$ principal components formed by the columns of $\mathbf{W}$ share then the same support of cardinality at most $k$.  We transform the problem to let the left eigenvectors appear: 
\begin{proposition}
Problem (1) is equivalent to:
\begin{align*}
\tag{2}
   \displaystyle \max_{\boldsymbol s\in \{0,1\}^p,\boldsymbol U \in \mathbb{R}^{n\times a}}\, &\, \sum_{i=1}^ps_i\sum_{j = 1}^a(\mathbf{X}_i^T\mathbf{U}_j)^2\\
    s.t.&\,\mathbf{e}^T\mathbf{s} \leq k, \\
    & \mathbf{U}^T\mathbf{U} = \mathbf{I}_a.\\
\end{align*}
\end{proposition}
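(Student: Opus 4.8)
The plan is to show that Problems (1) and (2) both reduce to the single support-selection problem $\max_{\mathbf{s}\in\{0,1\}^p,\ \mathbf{e}^T\mathbf{s}\le k}\pi(\mathbf{s})$, and then to turn optimal solutions of one into optimal solutions of the other. The bridge between the two formulations is the classical duality between the right and left singular subspaces of $\mathbf{X}(\mathbf{s})$, which is exactly the ``left eigenvector'' viewpoint the paper advertises.

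First I would fix a binary vector $\mathbf{s}$ with support $\sigma=\{i:s_i=1\}$ and analyze Problem (1) restricted to matrices $\mathbf{W}$ whose nonzero rows lie in $\sigma$. Writing $\tilde{\mathbf{W}}\in\mathbb{R}^{|\sigma|\times a}$ for the submatrix of those rows, one checks directly that the zero rows of $\mathbf{W}$ annihilate precisely the deleted columns of $\mathbf{X}$, so $\mathbf{W}^T\mathbf{X}^T\mathbf{X}\mathbf{W}=\tilde{\mathbf{W}}^T\mathbf{X}(\mathbf{s})^T\mathbf{X}(\mathbf{s})\tilde{\mathbf{W}}$ and $\mathbf{W}^T\mathbf{W}=\tilde{\mathbf{W}}^T\tilde{\mathbf{W}}$, so the orthonormality constraint transfers verbatim. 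Hence the best value of Problem (1) achievable with support inside $\sigma$ equals $\max_{\tilde{\mathbf{W}}^T\tilde{\mathbf{W}}=\mathbf{I}_a}tr(\tilde{\mathbf{W}}^T\mathbf{X}(\mathbf{s})^T\mathbf{X}(\mathbf{s})\tilde{\mathbf{W}})$.

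Next I would invoke the Ky Fan maximum principle: for a symmetric positive semidefinite matrix $\mathbf{M}$, the quantity $\max_{\mathbf{Q}^T\mathbf{Q}=\mathbf{I}_a}tr(\mathbf{Q}^T\mathbf{M}\mathbf{Q})$ is the sum of the $a$ largest eigenvalues of $\mathbf{M}$. Applying this to both $\mathbf{M}=\mathbf{X}(\mathbf{s})^T\mathbf{X}(\mathbf{s})$ and $\mathbf{M}=\mathbf{X}(\mathbf{s})\mathbf{X}(\mathbf{s})^T$, and using that these two Gram matrices share the same nonzero eigenvalues (the squared singular values of $\mathbf{X}(\mathbf{s})$), I obtain
\[
\max_{\tilde{\mathbf{W}}^T\tilde{\mathbf{W}}=\mathbf{I}_a}tr\bigl(\tilde{\mathbf{W}}^T\mathbf{X}(\mathbf{s})^T\mathbf{X}(\mathbf{s})\tilde{\mathbf{W}}\bigr)=\max_{\mathbf{U}^T\mathbf{U}=\mathbf{I}_a}tr\bigl(\mathbf{U}^T\mathbf{X}(\mathbf{s})\mathbf{X}(\mathbf{s})^T\mathbf{U}\bigr)=\pi(\mathbf{s}),
\]
the last equality being the definition of $\pi(\mathbf{s})$, attained at $\mathbf{U}[\mathbf{s}]$. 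Since the inner objective of Problem (2) is exactly $tr(\mathbf{U}^T\mathbf{X}(\mathbf{s})\mathbf{X}(\mathbf{s})^T\mathbf{U})=\sum_{i=1}^p s_i\sum_{j=1}^a(\mathbf{X}_i^T\mathbf{U}_j)^2$, maximizing it over $\mathbf{U}$ for fixed $\mathbf{s}$ also yields $\pi(\mathbf{s})$, so both problems equal $\max_{\mathbf{e}^T\mathbf{s}\le k}\pi(\mathbf{s})$.

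Finally I would make the two-way reduction explicit. Given a feasible $\mathbf{W}$ for (1), I take $\mathbf{s}=\mathbf{s}^\sigma$ for its row support $\sigma$; then $\mathbf{e}^T\mathbf{s}=|\sigma|\le k$ and the value of (1) is at most $\pi(\mathbf{s})$, which is attained in (2) by $(\mathbf{s},\mathbf{U}[\mathbf{s}])$. Conversely, given feasible $(\mathbf{s},\mathbf{U})$ for (2), its value is at most $\pi(\mathbf{s})$, which I realize in (1) by taking the Ky Fan maximizer $\tilde{\mathbf{W}}$ of $\mathbf{X}(\mathbf{s})^T\mathbf{X}(\mathbf{s})$ and embedding it into a $p\times a$ matrix supported on $\{i:s_i=1\}$, a feasible point of (1) with $\|\mathbf{W}\|_0\le k$. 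The main thing to get right is the eigenvalue identity of the previous step, on which the entire left-eigenvector viewpoint rests, together with the boundary bookkeeping when $a$ exceeds $\mathrm{rank}(\mathbf{X}(\mathbf{s}))$ (the surplus top eigenvalues are then zero on both sides, so the sums still agree) and the standing feasibility assumption $a\le\min(n,k)$ guaranteeing both orthonormality constraints can be met.
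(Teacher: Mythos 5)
Your proof is correct, and its mathematical core---that $\mathbf{X}(\mathbf{s})^T\mathbf{X}(\mathbf{s})$ and $\mathbf{X}(\mathbf{s})\mathbf{X}(\mathbf{s})^T$ share their nonzero eigenvalues, so the right- and left-eigenvector problems over a fixed support have the same optimal value---is exactly the paper's left-eigenvector switch; but your implementation is genuinely different. The paper argues by a chain of joint reformulations: it couples $\mathbf{s}$ to $\mathbf{W}$ as optimization variables, writes Problem (3) carrying both constraints $\mathbf{W}^T\mathbf{S}\mathbf{W}=\mathbf{I}_a$ and $\mathbf{W}^T\mathbf{W}=\mathbf{I}_a$ (the pair being what encodes $||\mathbf{W}||_0\le k$), drops the first via its Proposition 2 (an SVD argument showing $\mathbf{X}\mathbf{S}$ and $\mathbf{X}(\mathbf{s})$ have the same nonzero spectrum and left eigenvectors), identifies the inner problem as standard PCA on $\mathbf{X}(\mathbf{s})$, and expands the trace. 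You instead compare value functions: restrict (1) to a fixed support $\sigma$, pass to the row submatrix $\tilde{\mathbf{W}}$, apply Ky Fan's maximum principle to both Gram matrices so that each side equals $\pi(\mathbf{s})$, and close with explicit two-way maps between feasible points. Your route buys three things: it bypasses the paper's least convincing step (the claim that $\mathbf{W}^T\mathbf{W}=\mathbf{I}_a$ and $\mathbf{W}^T\mathbf{S}\mathbf{W}=\mathbf{I}_a$ jointly force rows off the support to vanish, whose row/column index bookkeeping in the paper is scrambled even though the idea is sound); it treats the rank-deficient case $a>\mathrm{rank}(\mathbf{X}(\mathbf{s}))$ explicitly; and it surfaces the hypothesis $a\le\min(n,k)$, which is genuinely needed---if $k<a\le n$, Problem (1) is infeasible while Problem (2) is not, so the proposition as stated silently assumes it. What the paper's longer detour buys is reuse: formulation (3) and its Proposition 2 serve as scaffolding for the later formulations (8)--(12), so its reformulation chain is an investment in the rest of the paper rather than the shortest path to this one proposition.
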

\begin{proof}
We introduce a variable $\boldsymbol s \in\{0,1\}^p$ and consider that $s_i = 0$ if $\mathbf{W}^T_i = \mathbf{0}$ and $s_i = 1$, otherwise. When $\sum_{i=1}^p s_i\leq k$, and $\mathbf{W}\in \mathbb{R}^{p\times a}$, we have $||\mathbf{S}\mathbf{W}||_0 \leq k$ where $\mathbf{S} = diag(\mathbf{s})$ and $||\mathbf{S}\mathbf{W}||_0$ is the number of nonzero rows of $\mathbf{SW}$. If the principal components are represented by the columns of $\mathbf{SW}$, then $\mathbf{W}^T\mathbf{S}^T\mathbf{S}\mathbf{W} = \mathbf{W}^T\mathbf{S}\mathbf{W} = \mathbf{I}_a$ ensures that the principal components are normalized and orthogonal. We note that if, in addition, we have $\mathbf{W}^T\mathbf{W} = \mathbf{I}_a$, we have $\forall i \in [p]$, $\sum_{j=1}^p W_{ij}^2 = 1$ but since $\sum_{j=1}^p s_jW_{ij}^2 = 1$, it means that $W_{ij} = 0$ if $s_j =0$. Hence, the combination of $\mathbf{W}^T\mathbf{S}\mathbf{W} = \mathbf{I}_a$ and $\mathbf{W}^T\mathbf{W}=\mathbf{I}_a$ ensures that $||\mathbf{W}||_0 \leq k$. Problem (1) can then be rewritten:

\begin{align*}
\tag{3}
    \displaystyle\max_{\boldsymbol s\in \{0,1\}^p,\mathbf{W}\in \mathbb{R}^{p\times a}} \,& tr(\mathbf{W}^T\mathbf{S}\mathbf{X}^T\mathbf{X}\mathbf{S}\mathbf{W})\\
    s.t.\,&\mathbf{e}^T\mathbf{s} \leq k,\\
    & \mathbf{W}^T\mathbf{S}\mathbf{W}= \mathbf{I}_a,\\
    & \mathbf{W}^T\mathbf{W} = \mathbf{I}_a.
\end{align*}
\medbreak

The constraints $\mathbf{W}^T\mathbf{S}\mathbf{W}= \mathbf{I}_a$ can be dropped (the proof is detailed in Proposition 2). Consider $\mathbf{X}(\mathbf{s})$ the matrix obtained when the columns $\mathbf{X}_i$ are removed from $\mathbf{X}$ when $s_i = 0$. Problem (3) can then be written considering inner and outer problems as follows:

\begin{align*}
     \displaystyle\max_{\boldsymbol s\in \{0,1\}^p, \mathbf{W}\in \mathbb{R}^{k\times a}} \,&   tr(\mathbf{W}^T\mathbf{X}(\mathbf{s})^T\mathbf{X}(\mathbf{s})\mathbf{W})\\
    s.t.\,&\mathbf{e}^T\mathbf{s} \leq k \\
    &  \mathbf{W}^T\mathbf{W} = \mathbf{I}_a.\\
\end{align*}

The inner problem is a standard PCA problem for the data matrix $\mathbf{X}(\mathbf{s})$. We can now write the equivalent problem considering the left eigenvectors of $\mathbf{X}(\mathbf{s})$:

\begin{align*}
\begin{aligned}
    \displaystyle\max_{\boldsymbol s\in \{0,1\}^p,\boldsymbol U \in \mathbb{R}^{n\times a}} \,& tr(\mathbf{U}^T\mathbf{X}(\mathbf{s})\mathbf{X}(\mathbf{s})^T\mathbf{U})\\
    s.t.\,&\mathbf{e}^T\mathbf{s} \leq k\, \\
    &\mathbf{U}^T\mathbf{U} = \mathbf{I}_a.\\
\end{aligned}
\end{align*}
By expanding the trace, we have then:
\begin{align*}
\begin{aligned}
    \displaystyle\max_{\boldsymbol s\in \{0,1\}^p,\boldsymbol U \in \mathbb{R}^{n\times a}} \,&  \sum_{i=1}^k\sum_{j = 1}^a(\mathbf{X}(\mathbf{s})_i^T\mathbf{U}_j)^2\\
    s.t.\,&\mathbf{e}^T\mathbf{s} \leq k,\\
    & \mathbf{U}^T\mathbf{U} = \mathbf{I}_a,\\
\end{aligned}
\end{align*}
which is equivalent to:
\begin{align*}
\begin{aligned}
    \displaystyle\max_{\boldsymbol s\in \{0,1\}^p,\boldsymbol U \in \mathbb{R}^{n\times a}} \,&\sum_{i=1}^ps_i\sum_{j = 1}^a(\mathbf{X}_i^T\mathbf{U}_j)^2\\
    s.t.\,&\mathbf{e}^T\mathbf{s} \leq k\\
    &\mathbf{U}^T\mathbf{U} = \mathbf{I}_a,\quad \quad \quad\\
\end{aligned}
\end{align*}
as $ \displaystyle\sum_{i=1}^k\sum_{j = 1}^a(\mathbf{X}(\mathbf{s})_i^T\mathbf{U}_j)^2 = \sum_{i=1}^ps_i\sum_{j = 1}^a(\mathbf{X}_i^T\mathbf{U}_j)^2$.
\end{proof}
$\sum_{j = 1}^a(\mathbf{X}_i^T\mathbf{U}_j)^2$ is the sum of the norms of the projections of $\mathbf{X}_i$ in the subspace generated by the columns of $\mathbf{U}$. In other words, the problem is to find $k$ columns of $\mathbf{X}$ that maximize the sum of the norms of their projections in a subspace of $\mathbb{R}^{n}$ of dimension $a$.\medbreak

\begin{proposition}
Problem $(3)$ is equivalent to:
\begin{align*}
    \max_{\boldsymbol s\in \{0,1\}^p,\mathbf{W}\in \mathbb{R}^{p\times a}} & \,tr(\mathbf{W}^T\mathbf{S}\mathbf{X}^T\mathbf{X}\mathbf{S}\mathbf{W})\\
    s.t.&\,\mathbf{e}^T\mathbf{s} \leq k,\\
    & \mathbf{W}^T\mathbf{W} = \mathbf{I}_a.
\end{align*}
\end{proposition}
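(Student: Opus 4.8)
The plan is to exploit that the second program is exactly Problem~(3) with the constraint $\mathbf{W}^T\mathbf{S}\mathbf{W}=\mathbf{I}_a$ removed. Since deleting a constraint only enlarges the feasible set, the optimal value of the relaxed program is automatically at least that of Problem~(3); the entire content of the proposition is the reverse inequality, namely that this particular constraint is redundant at optimality. First I would fix $\mathbf{s}\in\{0,1\}^p$ with $\mathbf{e}^T\mathbf{s}\le k$ and compare the two inner maximizations over $\mathbf{W}$, since the outer maximization over $\mathbf{s}$ is identical in both programs.

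The key structural observations are that $\mathbf{S}=diag(\mathbf{s})$ is symmetric and idempotent ($\mathbf{S}^2=\mathbf{S}$), so that $\mathbf{W}^T\mathbf{S}\mathbf{W}=(\mathbf{S}\mathbf{W})^T(\mathbf{S}\mathbf{W})$, and that the objective $tr(\mathbf{W}^T\mathbf{S}\mathbf{X}^T\mathbf{X}\mathbf{S}\mathbf{W})=\|\mathbf{X}\mathbf{S}\mathbf{W}\|_F^2$ depends on $\mathbf{W}$ only through $\mathbf{S}\mathbf{W}$, i.e.\ only through the rows of $\mathbf{W}$ indexed by $\sigma=\{i:s_i=1\}$. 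Setting $\mathbf{B}=\mathbf{S}\mathbf{X}^T\mathbf{X}\mathbf{S}\succeq 0$, both inner problems maximize $tr(\mathbf{W}^T\mathbf{B}\mathbf{W})$: the relaxed one subject only to $\mathbf{W}^T\mathbf{W}=\mathbf{I}_a$, whereas Problem~(3) additionally imposes $(\mathbf{S}\mathbf{W})^T(\mathbf{S}\mathbf{W})=\mathbf{I}_a$.

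I would then close the reverse inequality with a spectral argument. By the Ky Fan theorem the relaxed inner maximum equals $\sum_{i=1}^a\lambda_i(\mathbf{B})$, the sum of the $a$ largest eigenvalues of $\mathbf{B}$. Because $\mathbf{S}$ annihilates all rows and columns outside $\sigma$, the matrix $\mathbf{B}$ is supported on its $\sigma\times\sigma$ block, which equals $\mathbf{A}=\mathbf{X}(\mathbf{s})^T\mathbf{X}(\mathbf{s})$; hence the nonzero spectrum of $\mathbf{B}$ coincides with that of $\mathbf{A}$ and $\sum_{i=1}^a\lambda_i(\mathbf{B})=\sum_{i=1}^a\lambda_i(\mathbf{A})$. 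Taking the top $a$ orthonormal eigenvectors of $\mathbf{A}$ and embedding them into $\mathbb{R}^p$ with zeros off $\sigma$ yields a matrix $\mathbf{W}^\star$ satisfying $\mathbf{S}\mathbf{W}^\star=\mathbf{W}^\star$, so that both $\mathbf{W}^{\star T}\mathbf{W}^\star=\mathbf{I}_a$ and $\mathbf{W}^{\star T}\mathbf{S}\mathbf{W}^\star=\mathbf{I}_a$ hold; thus $\mathbf{W}^\star$ is feasible for Problem~(3) and attains $\sum_{i=1}^a\lambda_i(\mathbf{A})$. This shows the inner optimum of Problem~(3) is at least that of the relaxed program, and combined with the trivial inequality it gives equality for every $\mathbf{s}$, hence for the full programs.

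The main obstacle is precisely this reverse inequality: arguing that enlarging the feasible set by dropping $\mathbf{W}^T\mathbf{S}\mathbf{W}=\mathbf{I}_a$ cannot strictly raise the objective. Concretely, one must show that relaxing the orthonormality of $\mathbf{S}\mathbf{W}$ to the inequality $(\mathbf{S}\mathbf{W})^T(\mathbf{S}\mathbf{W})\preceq\mathbf{I}_a$ (which is all that $\mathbf{W}^T\mathbf{W}=\mathbf{I}_a$ forces on the $\sigma$-rows) does not help. This is exactly where positive semidefiniteness of $\mathbf{B}$ enters: since every eigenvalue is nonnegative, the Ky Fan maximum over the sub-orthonormal set is still attained on the orthonormal boundary, so no slack in the columns of $\mathbf{S}\mathbf{W}$ can improve the trace. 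I would also verify the boundary case $a>\textrm{rank}(\mathbf{A})$, where some of the top eigenvectors carry zero eigenvalue and both optima remain equal to $\sum_{i=1}^a\lambda_i(\mathbf{A})$.
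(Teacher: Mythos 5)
Your proof is correct and follows essentially the same route as the paper: the paper's proof takes the SVD of $\mathbf{X}(\mathbf{s})$ and pads the right singular vectors with zero rows to show that $\mathbf{XS}$ and $\mathbf{X}(\mathbf{s})$ share the same nonzero spectrum and left singular vectors, which is exactly your Ky Fan / zero-embedding argument applied to $\mathbf{B}=\mathbf{S}\mathbf{X}^T\mathbf{X}\mathbf{S}$. If anything, your write-up makes explicit what the paper leaves implicit --- that the padded maximizer $\mathbf{W}^\star$ satisfies $\mathbf{S}\mathbf{W}^\star=\mathbf{W}^\star$ and hence both orthogonality constraints simultaneously, so dropping $\mathbf{W}^T\mathbf{S}\mathbf{W}=\mathbf{I}_a$ cannot change the optimal value.
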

\begin{proof}
Consider an SVD decomposition of $\mathbf{X}(\mathbf{s}) = \mathbf{U\boldsymbol\Sigma V}^T$ with $\mathbf{U} \in \mathbb{R}^{n\times n}$, $\boldsymbol\Sigma \in \mathbb{R}^{n \times k}$ and $\mathbf{V}\in \mathbb{R}^{k \times k}$ such that $\boldsymbol\Sigma$ is a diagonal matrix and $\mathbf{U}^T\mathbf{U} = \mathbf{V}^T\mathbf{V} = \mathbf{I}_k$. Consider now $\overline{\mathbf{V}} \in \mathbb{R}^{p \times k}$ such that $\overline{\mathbf{V}}_i = 0$ if $s_i = 0$ and the columns of $\mathbf{V}$ appear in $\overline{\mathbf{V}}$ in the indices $i$ for which $s_i = 1$ in the same order as in $\mathbf{V}$. We can easily verify that $\mathbf{XS} = \mathbf{U\boldsymbol\Sigma V}'^T$ which shows that $\mathbf{XS}$ and $\mathbf{X}(\mathbf{s})$ have the same nonzero eigenvalues and the same left eigenvectors.
\end{proof}

\subsection{Geometric approximation formulation}

Although the problem has been transformed, the two main issues of the problem are still present; the objective function is not concave and we still need to deal with the non-convex orthogonality constraints. In this subsection, our objective is to provide a binary linear problem that approximates Problem (2). We first propose a linear approximation of the objective function and outline the geometric intuition supporting this approximation. We then replace the non-convex constraints by a set of linear constraints yielding the same optimal solutions. The approximation introduces a new parameter (noted $\eta$). We also show that optimal solutions of Problem (2) are also optimal solutions for the approximation if $\eta$ is chosen appropriately. 

\medbreak We start by addressing the non-concavity of the objective function. The underlying hypothesis when using PCA to tackle a data problem is that the data matrix $\mathbf{X}$ can be written $\mathbf{X} = \mathbf{X}' + \boldsymbol\epsilon$ where $\mathbf{X}'$ is a matrix of rank $a<min(n,p)$ and $\boldsymbol\epsilon$ is a low-norm matrix representing noise, or second order phenomena or other perturbations that are to be ignored by PCA modelling. $\mathbf{X}'$ is found by projecting $\mathbf{X}$ according to a set of $a$ orthonormal vectors $\mathbf{V}_i$, the columns of $\mathbf{V}$, $\mathbf{X}'=\mathbf{VV}^T\mathbf{X}$. Since $||\mathbf{X} - \mathbf{X}'||_F = ||\boldsymbol\epsilon||_F \approx 0$, we consider that $||\mathbf{X}||_F\approx ||\mathbf{X}'||_F$.\medbreak
When considering Problem $(2)$, the objective function is the sum of the norms of the projections of $k$ columns of $\mathbf{X}$ on the subspace generated by the columns of $\mathbf{U}$. Since the inner problem of problem $(2)$ is the standard PCA problem, and using the approximation we just mentioned, we can approximate the objective function as the sum of the norms of $k$ columns of $\mathbf{X}$ instead of the sum of the norms of their projections whenever $||\mathbf{X} - \mathbf{UU}^T\mathbf{X}||_F$ is small enough. We introduce then the parameter $\eta$ that bounds that norm of the difference of $\mathbf{X}$ and its projection $\mathbf{UU}^T\mathbf{X}$. This can be expressed by a constraint $||\mathbf{X} - \mathbf{UU}^T\mathbf{X}||_F^2\leq \eta$ for a given $\eta \geq 0$:
\begin{align*}
   \displaystyle \max_{\boldsymbol s\in \{0,1\}^p,\boldsymbol U \in \mathbb{R}^{n\times a}}\, &\, \,\sum_{i=1}^p s_i||\mathbf{X}_i||^2\\
    s.t.&\,\mathbf{e}^T\mathbf{s} \leq k, \\
    & \mathbf{U}^T\mathbf{U} = \mathbf{I}_a,\\
    &\,||\mathbf{X}(\mathbf{s}) - \mathbf{UU}^T\mathbf{X}(\mathbf{s})||_F^2 \leq \eta \\
\end{align*}
that could be rewritten as follows:
\begin{align*}
\tag{5}
\begin{aligned}
    \max_{\boldsymbol s\in \{0,1\}^p, \mathbf{e}^T\mathbf{s} \leq k}  \,&\, \max_{\boldsymbol U \in \mathbb{R}^{n\times a}}   & \,\sum_{i=1}^ps_i||\mathbf{X}_i||^2 \quad\quad\quad\quad\quad\quad\quad\\
     &\quad  s.t. &\, \mathbf{U}^T\mathbf{U} = \mathbf{I}_a,\quad\quad\quad\quad\quad\quad\quad\\
     & &\,||\mathbf{X}(\mathbf{s}) - \mathbf{UU}^T\mathbf{X}(\mathbf{s})||_F^2 \leq \eta. \\
\end{aligned}
\end{align*}
 
We now consider the constraints $\mathbf{U}^T\mathbf{U} = \mathbf{I}_a$. These constraints are difficult to work with as they are non-convex. Yet, $\mathbf{U}$ does not appear in the objective function considered now. For a given $\mathbf{s} \in\{0,1\}^p$, if we can find a sufficient condition for the existence of a $\mathbf{U}$ that satisfies $||\mathbf{X}(\mathbf{s}) - \mathbf{UU}^T\mathbf{X}(\mathbf{s})||_F^2 \leq \eta$, then $s$ is feasible. Consider now $\mathbf{U}[\mathbf{s}]\in argmin_\mathbf{U}\,||\mathbf{X}(\mathbf{s}) - \mathbf{UU}^T\mathbf{X}(\mathbf{s})||_F^2 \,s.t.\,\mathbf{U}^T\mathbf{U} = \mathbf{I}_a$. $\mathbf{U}[\mathbf{s}]$ is a solution of the standard PCA problem for matrix $\mathbf{X}(\mathbf{s})$. 
\begin{proposition}
Problem $(5)$ is equivalent to the following formulation $(6)$ for any $\mathbf{U}[\mathbf{s}]\in argmin_\mathbf{U}\,||\mathbf{X}(\mathbf{s}) - \mathbf{UU}^T\mathbf{X}(\mathbf{s})||_F^2\,s.t.\,\mathbf{U}^T\mathbf{U} = \mathbf{I}_a$:
\begin{align*}
\tag{6}
    \max_{\boldsymbol s\in \{0,1\}^p} & \,\sum_{i=1}^ps_i||\mathbf{X}_i||^2&\\
    s.t.\,&\mathbf{e}^T\mathbf{s} \leq k, \\
    & \eta(\mathbf{s}) \leq \eta.\\
\end{align*}
\end{proposition}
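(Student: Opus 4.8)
The plan is to exploit the fact that in Problem $(5)$ the objective $\sum_{i=1}^p s_i||\mathbf{X}_i||^2$ does not depend on $\mathbf{U}$ at all, so the inner maximization over $\mathbf{U}$ never changes the value of the objective and serves purely as a feasibility test on the outer variable $\mathbf{s}$. Concretely, for a fixed $\mathbf{s}$ with $\mathbf{e}^T\mathbf{s}\leq k$, the inner problem is either feasible---in which case its optimal value is exactly $\sum_{i=1}^p s_i||\mathbf{X}_i||^2$---or infeasible, in which case such an $\mathbf{s}$ contributes nothing to the outer maximization and should be discarded. Thus the entire content of the equivalence reduces to characterizing, in terms of $\mathbf{s}$ alone, precisely which $\mathbf{s}$ admit a feasible $\mathbf{U}$.

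First I would fix $\mathbf{s}$ and ask whether there exists $\mathbf{U}\in\mathbb{R}^{n\times a}$ with $\mathbf{U}^T\mathbf{U}=\mathbf{I}_a$ satisfying $||\mathbf{X}(\mathbf{s})-\mathbf{U}\mathbf{U}^T\mathbf{X}(\mathbf{s})||_F^2\leq\eta$. Such a $\mathbf{U}$ exists if and only if the minimum over all orthonormal $\mathbf{U}$ of the residual $||\mathbf{X}(\mathbf{s})-\mathbf{U}\mathbf{U}^T\mathbf{X}(\mathbf{s})||_F^2$ is at most $\eta$. By the very definition of $\mathbf{U}[\mathbf{s}]$ as a minimizer of this residual and of $\eta(\mathbf{s})$ as the associated minimum value, this minimum equals $\eta(\mathbf{s})$. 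Hence a feasible $\mathbf{U}$ exists precisely when $\eta(\mathbf{s})\leq\eta$: if $\eta(\mathbf{s})\leq\eta$ then $\mathbf{U}[\mathbf{s}]$ itself is feasible, and if $\eta(\mathbf{s})>\eta$ then no $\mathbf{U}$ can meet the bound, since $\eta(\mathbf{s})$ is already the smallest achievable residual.

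Substituting this feasibility characterization back into the outer maximization replaces the inner $\max_{\mathbf{U}}$ together with its two constraints $\mathbf{U}^T\mathbf{U}=\mathbf{I}_a$ and $||\mathbf{X}(\mathbf{s})-\mathbf{U}\mathbf{U}^T\mathbf{X}(\mathbf{s})||_F^2\leq\eta$ by the single constraint $\eta(\mathbf{s})\leq\eta$ on $\mathbf{s}$, while leaving the objective $\sum_{i=1}^p s_i||\mathbf{X}_i||^2$ and the cardinality constraint $\mathbf{e}^T\mathbf{s}\leq k$ untouched. This is exactly Problem $(6)$, which gives the claimed equivalence. I would also remark that the statement holds for \emph{any} choice of $\mathbf{U}[\mathbf{s}]$ among the minimizers: even when the minimizer fails to be unique (because of eigenvalue multiplicities of $\mathbf{X}(\mathbf{s})$), the optimal value $\eta(\mathbf{s})$ is identical for all of them, so the constraint $\eta(\mathbf{s})\leq\eta$ is unambiguous and the proposition is well posed.

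I do not anticipate a serious obstacle, as the argument is essentially a feasibility/minimax observation rather than a computation. The one point requiring care---and the only place where the earlier standard-PCA facts enter---is the identification of the minimal residual with $\eta(\mathbf{s})$, which relies on $\mathbf{U}[\mathbf{s}]$ being a genuine minimizer of the projection error. This follows from the decomposition $\mu(\mathbf{s})=\pi(\mathbf{s})+\eta(\mathbf{s})$ recorded in the notation, in which $\mu(\mathbf{s})$ is fixed for a given $\mathbf{s}$, so that minimizing the residual $\eta(\mathbf{s})$ coincides with maximizing the explained component $\pi(\mathbf{s})$; this is precisely why the minimizer of the residual agrees with the maximizer defining $\mathbf{U}[\mathbf{s}]$.
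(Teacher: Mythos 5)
Your proposal is correct and follows essentially the same route as the paper's own proof: both reduce the inner maximization over $\mathbf{U}$ to a pure feasibility question (since the objective does not depend on $\mathbf{U}$), and both characterize feasibility by noting that $\mathbf{U}[\mathbf{s}]$ attains the minimal residual, so a feasible $\mathbf{U}$ exists precisely when $\eta(\mathbf{s})\leq\eta$. Your additional remarks on well-posedness over non-unique minimizers and on the identity $\mu(\mathbf{s})=\pi(\mathbf{s})+\eta(\mathbf{s})$ linking the argmin and argmax definitions of $\mathbf{U}[\mathbf{s}]$ are sound refinements of the same argument rather than a different approach.
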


\begin{proof}
The objective function of the inner problem of $(5)$ depends only on the existence of $\mathbf{U}$ such that $||\mathbf{X}(\mathbf{s}) - \mathbf{UU}^T\mathbf{X}(\mathbf{s})||_F^2 \leq \eta$ and $\mathbf{U}^T\mathbf{U}=\mathbf{I}_a$. If such $\mathbf{U}$ exists, the objective function is equal to $\sum_{i=1}^ps_i||\mathbf{X}_i||^2$. If $\mathbf{U}[\mathbf{s}]$ verifies $||\mathbf{X}(\mathbf{s}) - \mathbf{U}[\mathbf{s}]\mathbf{U}[\mathbf{s}]^T\mathbf{X}(\mathbf{s})||_F^2 \leq \eta$, then $\mathbf{U}[\mathbf{s}]$ is a feasible solution of the inner problem of $(5)$. Conversely, if the inner problem of $(5)$ is feasible then by definition $||\mathbf{X}(\mathbf{s}) - \mathbf{U}[\mathbf{s}]\mathbf{U}[\mathbf{s}]^T\mathbf{X}(\mathbf{s})||_F^2 \leq \eta$.
\end{proof}
We replace finally the constraints $\eta(\mathbf{s}) \leq \eta$ by eliminating the vectors $\boldsymbol s$ for which we have  $\eta(\mathbf{s}) > \eta$. Consider $\sigma \subset [p]$, and $\mathbf{s}^{\sigma}\in\{0,1\}^p$ such that $s^{\sigma}_i=1$ if $i\in \sigma$ and $s^{\sigma}_i=0$ otherwise. If $\eta(\mathbf{s^\sigma}) > \eta$, $\mathbf{s}^{\sigma}$ is not feasible and could be cut using the following inequality:
$$\sum_{i\in\sigma}s_i \leq |\sigma| - 1.$$
Hence, we derive the following approximate formulation for the sparse PCA problem with PCs sharing a common support:
\begin{align*}
\tag{7}
    \max_{\boldsymbol s\in \{0,1\}^p} & \,\sum_{i=1}^ps_i||\mathbf{X}_i||^2\\
     s.t.\,&\mathbf{e}^T\mathbf{s} \leq k,\\
    & \forall \sigma \subset [p], \eta(\mathbf{s}^{\sigma}) > \eta\\
    &\Rightarrow \sum_{i\in\sigma}s_i \leq |\sigma| - 1.  \\
\end{align*}
We note $f(\eta)$ the value of the objective function of problem $(2)$ evaluated for an arbitrarily chosen optimal solution of problem $(7)$ for a chosen $\eta >0$ (if problem $(7)$ has several solutions, we choose one that minimizes $||\mathbf{X}(\mathbf{s})-\mathbf{U}^T\mathbf{U}\mathbf{X}(\mathbf{s})||_F$) and let us prove that an optimal solution to problem $(2)$ can be found using formulation (7):
\begin{theorem}
Consider $\mathbf{s}^o$ an optimal solution of problem $(2)$. There exists $\delta > 0$ such that for any $\eta \in [\eta(\mathbf{s}^o),\eta(\mathbf{s}^o) + \delta]$, optimal solutions of problem $(7)$ are optimal solutions of problem $(2)$.
\end{theorem}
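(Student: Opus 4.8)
The plan is to exploit two structural facts. First, both problems are, once the inner variable $\mathbf{U}$ is optimized out, pure combinatorial problems over the finite set of supports $\mathbf{s}\in\{0,1\}^p$ with $\mathbf{e}^T\mathbf{s}\le k$; second, the identity $\mu(\mathbf{s})=\pi(\mathbf{s})+\eta(\mathbf{s})$ from the notation section links the objective actually optimized by $(7)$ (namely $\mu$) to the true objective of $(2)$ (namely $\pi$). Concretely, for fixed $\mathbf{s}$ the inner maximization over $\mathbf{U}$ with $\mathbf{U}^T\mathbf{U}=\mathbf{I}_a$ attains $\pi(\mathbf{s})$, so $(2)$ is equivalent to $\max\{\pi(\mathbf{s})\colon \mathbf{e}^T\mathbf{s}\le k\}$, while $(7)$ (whose cuts merely encode the constraint $\eta(\mathbf{s})\le\eta$ by Proposition~3) is equivalent to $\max\{\mu(\mathbf{s})\colon \mathbf{e}^T\mathbf{s}\le k,\ \eta(\mathbf{s})\le\eta\}$. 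Writing $\mathrm{OPT}_2=\pi(\mathbf{s}^o)$ for the optimal value of $(2)$, the goal reduces to showing that every maximizer $\mathbf{s}^*$ of $(7)$ satisfies $\pi(\mathbf{s}^*)=\mathrm{OPT}_2$.

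Next I would use finiteness to manufacture a margin. Since $\{0,1\}^p$ is finite, the set $\{\mathrm{OPT}_2-\pi(\mathbf{s})\colon \mathbf{e}^T\mathbf{s}\le k,\ \pi(\mathbf{s})<\mathrm{OPT}_2\}$ is a finite set of positive reals; I let $\gamma>0$ be its minimum (if the set is empty, then every feasible support is already optimal for $(2)$ and any $\delta>0$ works). I then fix any $\delta<\gamma$, say $\delta=\gamma/2$, and take $\eta\in[\eta(\mathbf{s}^o),\eta(\mathbf{s}^o)+\delta]$.

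The core estimate follows. Because $\eta\ge\eta(\mathbf{s}^o)$ and $\mathbf{e}^T\mathbf{s}^o\le k$, the support $\mathbf{s}^o$ is feasible for $(7)$, so any optimizer $\mathbf{s}^*$ of $(7)$ obeys $\mu(\mathbf{s}^*)\ge\mu(\mathbf{s}^o)$ together with $\eta(\mathbf{s}^*)\le\eta$. Substituting $\mu=\pi+\eta$ then yields
\[
\mathrm{OPT}_2-\pi(\mathbf{s}^*)=\bigl(\mu(\mathbf{s}^o)-\mu(\mathbf{s}^*)\bigr)+\bigl(\eta(\mathbf{s}^*)-\eta(\mathbf{s}^o)\bigr)\le 0+\bigl(\eta-\eta(\mathbf{s}^o)\bigr)\le\delta<\gamma.
\]
Since $\mathbf{s}^*$ inherits the cardinality constraint and is therefore feasible for $(2)$, we also have $\pi(\mathbf{s}^*)\le\mathrm{OPT}_2$, so $\mathrm{OPT}_2-\pi(\mathbf{s}^*)\ge0$. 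A nonnegative quantity that is strictly smaller than $\gamma$ cannot lie in the set whose minimum defines $\gamma$, so it must equal $0$; hence $\pi(\mathbf{s}^*)=\mathrm{OPT}_2$ and $\mathbf{s}^*$ is optimal for $(2)$.

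The one place demanding care is the claim that $\gamma$ is strictly positive and chosen \emph{before} (hence independently of) $\eta$: this is exactly where the discreteness of $\{0,1\}^p$ is indispensable, since over a continuum of supports the gap between optimal and suboptimal objective values could degenerate to zero and no uniform $\delta$ would exist. Once this margin is in hand the algebra is routine, driven entirely by $\mu=\pi+\eta$ together with $\mu(\mathbf{s}^*)\ge\mu(\mathbf{s}^o)$ and $\eta(\mathbf{s}^*)\le\eta$. I note in passing that the tie-breaking rule used earlier to define $f(\eta)$ is not required for this theorem, as the conclusion holds for \emph{every} optimizer of $(7)$.
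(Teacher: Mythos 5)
Your proof is correct, and it runs on the same algebraic engine as the paper's---the identity $\mu(\mathbf{s})=\pi(\mathbf{s})+\eta(\mathbf{s})$, feasibility of $\mathbf{s}^o$ in $(7)$ once $\eta\ge\eta(\mathbf{s}^o)$, and the constraint $\eta(\mathbf{s}^*)\le\eta$ satisfied by any $(7)$-optimizer---but it handles the interval $[\eta(\mathbf{s}^o),\eta(\mathbf{s}^o)+\delta]$ by a genuinely different mechanism. The paper argues in two stages: first, at the single value $\eta=\eta(\mathbf{s}^o)$, a contradiction argument shows any $(7)$-optimal $\mathbf{s}'$ that is suboptimal for $(2)$ must have $\eta(\mathbf{s}')>\eta(\mathbf{s}^o)$ and hence be infeasible; second, it places the margin in $\eta$-space, choosing $\delta$ as half the distance from $\eta(\mathbf{s}^o)$ to the smallest attainable value $\tilde\eta$ strictly above it, so that the feasible set of $(7)$---and therefore its set of optimizers---is literally unchanged across the interval. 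You instead place the margin in objective-value space ($\gamma$ being the smallest positive optimality gap of $(2)$ over the finitely many supports) and prove the direct quantitative estimate $\mathrm{OPT}_2-\pi(\mathbf{s}^*)\le\eta-\eta(\mathbf{s}^o)\le\delta<\gamma$, which forces the gap to vanish even though the feasible set of $(7)$ may genuinely enlarge as $\eta$ moves past $\eta(\mathbf{s}^o)$. Your route buys two things: the intermediate inequality is exactly the paper's worst-case bound of Proposition 5, so the theorem and the optimality bound come out of one computation, and the whole interval is dispatched in a single stroke with no claim that the feasible set stays fixed; the paper's route buys a $\delta$ explicitly computable from the attainable $\eta$-values and the stronger structural fact that nothing changes on the interval. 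One shared caveat: both you and the paper identify the feasible set of $(7)$ with $\{\mathbf{s}:\mathbf{e}^T\mathbf{s}\le k,\ \eta(\mathbf{s})\le\eta\}$ (your attribution of this to Proposition 3 is slightly off, since that proposition concerns the passage from $(5)$ to $(6)$); for $\mathbf{s}^o$ to survive all the cuts one needs $\eta(\mathbf{s}^\sigma)\le\eta(\mathbf{s}^o)$ for every $\sigma$ contained in the support of $\mathbf{s}^o$, a monotonicity fact that is true but proved nowhere in the paper, so on this point your argument is exactly as complete as the original---no more, no less.
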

\begin{proof}
By contradiction, consider $\eta = \eta(\mathbf{s}^o)$ and suppose that there exists $\mathbf{s}'\in\{0,1\}^p$ such that $\mathbf{s}'$ is an optimal solution of problem $(7)$ and is not an optimal solution for problem $(2)$ which implies $\sum_{i=1}^ps^o_i\sum_{j=1}^a(\mathbf{U}_j[\mathbf{s}^o]^T\mathbf{X}_i)^2 > \sum_{i=1}^ps'_i\sum_{j=1}^a(\mathbf{U}_j[\mathbf{s}']^T\mathbf{X}_i)^2$ according to problem $(2)$. Since $\mathbf{s}'$ is an optimal solution of problem $(7)$ and $s^o$ is feasible in (7) because $\eta = \eta(\mathbf{s}^o)$, we have $\sum_{i = 1}^ps'_i||\mathbf{X}_i||^2 \geq \sum_{i = 1}^ps^o_i||\mathbf{X}_i||^2$, which yields then:

$$\sum_{i = 1}^ps'_i||\mathbf{X}_i||^2 - \sum_{i=1}^ps'_i\sum_{j=1}^a(\mathbf{U}_j[\mathbf{s}']^T\mathbf{X}_i)^2 > \sum_{i = 1}^ps^o_i||\mathbf{X}_i||^2 - \sum_{i=1}^ps^o_i\sum_{j=1}^a(\mathbf{U}_j[\mathbf{s}^o]^T\mathbf{X}_i)^2, $$
i.e.
$$\eta(\mathbf{s}') > \eta(\mathbf{s}^o), $$
which means that $\mathbf{s}'$ is not feasible as it violates the constraint $\eta(\mathbf{s}) \leq \eta(\mathbf{s}^o)$.\medbreak

Since $f(\eta)$ takes discrete and finite values as the number of $\boldsymbol s \in\{0,1\}^p$ is finite, we can choose $\delta>0$ such that the set of feasible solutions of problem $(7)$ remains the same as if $\eta = \eta(\mathbf{s}^o)$ to ensure that $\mathbf{s}^o$ remains an optimal solution of problem $(7)$. For example, we can consider $\mathbf{\Tilde{s}}\in argmin_\mathbf{s}||\mathbf{X}(\mathbf{s}) - \mathbf{U}[\mathbf{s}]\mathbf{U}[\mathbf{s}]^T\mathbf{X}(\mathbf{s})||_F^2 > \eta(\mathbf{s}^o)$ and choose $\delta = (\Tilde{\eta} - \eta(\mathbf{s}^o))/2$ with $\Tilde{\eta} = ||\mathbf{X}(\mathbf{\Tilde{s}}) - \mathbf{U}[\mathbf{\Tilde{s}}]\mathbf{U}[\mathbf{\Tilde{s}}]^T\mathbf{X}(\mathbf{\Tilde{s}})||_F^2$.
\end{proof}
Beyond the theoretical value of Theorem 4 as it shows that with an adequate $\eta$, an optimal solution could be found. More practically, this theorem is also used in the design of a practical algorithm in Section 2.4 (Algorithm 2) can approach and find the optimal solution in a finite number of steps.\medbreak
Although the objective function and the constraints of problem $(7)$ are linear, there are still potentially an exponential number of constraints (the number of subsets of [p] is $2^p$) which can hinder the tractability of the problem and we still need to assess the quality of the solution found by solving problem $(7)$ relative to the optimal solution of $(1)$. We start by providing a theoretical bound that partially addresses the first question and further address practical and empirical aspects in Section 3. We also address the large number of constraints by proposing a simple separation procedure to generate useful constraints without including all the constraints in the problem.\medbreak
Finally we note that the approximation $||\mathbf{\epsilon}||_F\approx 0$ is considered only to provide the reader with an intuition behind the approximation. $||\mathbf{\epsilon}||_F\approx 0$ is not a required assumption in any of the results of this paper.
\subsection{Worst case upper bound}
Before introducing an algorithm to solve Problem (7), we provide tight worst case upper bounds for the difference between the optimal value of Problem (2) and the value of the objective function of Problem (2) evaluated at an optimal solution of the approximation Problem (7). After showing the role $\eta$ plays in these bounds, we derive a straightforward way to derive an upper bound without knowing $\eta$. The relationship between the worst case upper bound and $\eta$ furthermore enables us to design an algorithm that approaches an optimal solution of the original Problem (2). Furthermore, this algorithm finds the optimal solution to the original Problem (2) in a finite number of iterations.
\begin{proposition}
Consider $(\mathbf{s}^o,\mathbf{U}[\mathbf{s}^o])$ an optimal solution of problem $(2)$.   Let $\mathbf{s}^A$ be an optimal solution of problem $(7)$ for $\eta \geq \eta(\mathbf{s}^o)$, then $\pi(\mathbf{s}^o) - \pi(\mathbf{s}^A) \leq \eta$; meaning that the difference between the optimal value of problem $(2)$,$ \sum_{i=1}^ps^o_i\sum_{j = 1}^a(\mathbf{X}_i^T\mathbf{U}[\mathbf{s}^o]_j)^2$, and the value of the objective function of problem $(2)$ evaluated at $\mathbf{s}^A$, $ \sum_{i=1}^ps^A_i\sum_{j = 1}^a(\mathbf{X}_i^T\mathbf{U}[\mathbf{s}^A]_j)^2$ is bounded by $\eta$. Furthermore, this bound is tight.
\end{proposition}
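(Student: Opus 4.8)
The plan is to reduce everything to the three scalar quantities $\mu$, $\pi$, and $\eta$ and then exploit the identity $\mu(\mathbf{s}) = \pi(\mathbf{s}) + \eta(\mathbf{s})$ recorded in the notation. First I would note that for any fixed $\mathbf{s}$ the inner maximization over $\mathbf{U}$ in problem $(2)$ is exactly the PCA problem for $\mathbf{X}(\mathbf{s})$, whose optimum is attained at $\mathbf{U}[\mathbf{s}]$ and whose value is $\pi(\mathbf{s})$. Hence the optimal value of $(2)$ equals $\pi(\mathbf{s}^o)$, and the value of the objective of $(2)$ evaluated at $\mathbf{s}^A$ (with $\mathbf{U}=\mathbf{U}[\mathbf{s}^A]$) equals $\pi(\mathbf{s}^A)$, so the quantity to bound is precisely $\pi(\mathbf{s}^o)-\pi(\mathbf{s}^A)$.

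The bound is then a short manipulation combining three facts. Using $\pi(\mathbf{s})=\mu(\mathbf{s})-\eta(\mathbf{s})$ I would write
\[
\pi(\mathbf{s}^o) - \pi(\mathbf{s}^A) = \big(\mu(\mathbf{s}^o) - \mu(\mathbf{s}^A)\big) + \big(\eta(\mathbf{s}^A) - \eta(\mathbf{s}^o)\big).
\]
The first bracket is nonpositive: since $\eta(\mathbf{s}^o)\le\eta$ and $\mathbf{e}^T\mathbf{s}^o\le k$, the vector $\mathbf{s}^o$ is feasible for $(7)$, whose objective is $\mu$, and as $\mathbf{s}^A$ is optimal for $(7)$ we get $\mu(\mathbf{s}^A)\ge\mu(\mathbf{s}^o)$. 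For the second bracket, $\eta(\mathbf{s}^o)\ge 0$ as a squared Frobenius norm, while $\eta(\mathbf{s}^A)\le\eta$ because $\mathbf{s}^A$ is feasible for $(7)$. Chaining these yields $\pi(\mathbf{s}^o)-\pi(\mathbf{s}^A)\le\eta(\mathbf{s}^A)-\eta(\mathbf{s}^o)\le\eta(\mathbf{s}^A)\le\eta$, which is the asserted inequality.

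For tightness I would exhibit a small instance where all three of these inequalities hold with equality, i.e. $\mu(\mathbf{s}^o)=\mu(\mathbf{s}^A)$, $\eta(\mathbf{s}^o)=0$, and $\eta(\mathbf{s}^A)=\eta$. A convenient choice is $n=2$, $a=1$, $k=2$, with columns $\mathbf{X}_1=\mathbf{X}_2=(1,0)^T$ and $\mathbf{X}_3=(c,d)^T$, $\mathbf{X}_4=(c,-d)^T$ normalized so that $c^2+d^2=1$ with $c,d\neq 0$; taking $c=d=1/\sqrt{2}$ makes the numbers explicit. Every column has unit norm, so $\mu(\mathbf{s})=2$ for every admissible support and $\pi(\mathbf{s})=2-\eta(\mathbf{s})$. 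The unique support with $\eta=0$ is $\mathbf{s}^o=\{1,2\}$, giving $\pi(\mathbf{s}^o)=2$, while for $\mathbf{s}^A=\{3,4\}$ a direct computation of $\mathbf{X}(\mathbf{s}^A)\mathbf{X}(\mathbf{s}^A)^T=\mathrm{diag}(2c^2,2d^2)$ gives $\pi(\mathbf{s}^A)=\max(2c^2,2d^2)$ and $\eta(\mathbf{s}^A)=2\min(c^2,d^2)$. Setting $\eta=\eta(\mathbf{s}^A)$, the support $\mathbf{s}^A$ is an optimal solution of $(7)$ (its objective $\mu=2$ is maximal and it is feasible at the boundary), and $\pi(\mathbf{s}^o)-\pi(\mathbf{s}^A)=2-\max(2c^2,2d^2)=2\min(c^2,d^2)=\eta$, so the bound is attained.

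The inequality itself is essentially immediate once the $\mu=\pi+\eta$ decomposition is invoked, so the main obstacle is the tightness half: one must design an instance in which the optimum $\mathbf{s}^o$ of $(2)$ and the optimum $\mathbf{s}^A$ of $(7)$ carry the same total squared column norm $\mu$ (forcing the first bracket to vanish) while simultaneously $\eta(\mathbf{s}^o)=0$ and $\eta(\mathbf{s}^A)=\eta$. Since equal $\mu$ across supports necessarily creates ties among the optimizers of $(7)$, a little care is needed to confirm that $\mathbf{s}^A=\{3,4\}$ is a legitimate optimal choice — which it is, since the proposition only requires $\mathbf{s}^A$ to be \emph{an} optimal solution.
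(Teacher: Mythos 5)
Your argument is correct, and it splits naturally into two halves that compare differently against the paper. For the inequality itself you are on exactly the paper's route: the paper's three displayed inequalities are precisely your three facts in the $\mu,\pi,\eta$ notation --- $\sum_i s^o_i\|\mathbf{X}_i\|^2\ge \mathbf{V}^o$ is your $\eta(\mathbf{s}^o)\ge 0$, $\mathbf{V}^A\ge\sum_i s^A_i\|\mathbf{X}_i\|^2-\eta$ is your $\eta(\mathbf{s}^A)\le\eta$, and the closing step $\sum_i s^o_i\|\mathbf{X}_i\|^2\le\sum_i s^A_i\|\mathbf{X}_i\|^2$ is your first bracket, obtained from the same feasibility-plus-optimality argument; so that half is a restatement, not a new proof. (One shared, easily repaired omission: both you and the paper deduce feasibility of $\mathbf{s}^o$ in $(7)$ directly from $\eta(\mathbf{s}^o)\le\eta$, but the cuts in $(7)$ are indexed by \emph{all} subsets $\sigma$, so one must also note the monotonicity $\eta(\mathbf{s}^\sigma)\le\eta(\mathbf{s}^o)$ for every $\sigma$ contained in the support of $\mathbf{s}^o$, which holds because restricting to fewer columns can only decrease the optimal residual.) Where you genuinely depart from the paper is the tightness argument, and your version is the stronger one. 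The paper's example $\mathbf{X}=\big(\begin{smallmatrix}-\epsilon&\epsilon&1&1\\1&1&0&0\end{smallmatrix}\big)$ only produces a nonzero gap when $\eta<\eta(\mathbf{s}^o)$, i.e.\ outside the regime $\eta\ge\eta(\mathbf{s}^o)$ that the proposition covers --- whenever $\eta\ge\eta(\mathbf{s}^o)$ there, $\mathbf{s}^o$ is the unique maximizer of $\mu$ among feasible supports, so $\mathbf{s}^A=\mathbf{s}^o$ and the gap is $0$ (and, as printed, the numbers are also internally inconsistent: for the displayed matrix $\eta(\mathbf{s}^*)=2\epsilon^2$ and $\pi(\mathbf{s}')=2$, not $2\epsilon$ and $2-2\epsilon$). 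Your construction repairs exactly this defect: you identified that equality in the chain forces $\mu(\mathbf{s}^o)=\mu(\mathbf{s}^A)$, $\eta(\mathbf{s}^o)=0$ and $\eta(\mathbf{s}^A)=\eta$, so ties in the objective of $(7)$ are unavoidable, and you then exploit the proposition's wording that $\mathbf{s}^A$ is merely \emph{an} optimal solution: with all columns of unit norm, $\{3,4\}$ is a legitimate optimizer of $(7)$ alongside $\{1,2\}$, and the gap equals $\eta$ exactly while the hypothesis $\eta\ge\eta(\mathbf{s}^o)=0$ holds. In short: same mechanism as the paper for the bound, but a cleaner and actually valid witness for its tightness.
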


\begin{proof}
Note $\mathbf{V}^o$ the optimal value of problem $(2)$, $\mathbf{s}^A$ a solution to problem $(7)$ and $\mathbf{V}^A = \max_{\boldsymbol U \in\mathbb{R}^{n\times a}}$ $\sum_{i=1}^ps^A_i\sum_{j = 1}^a(\mathbf{X}_i^T\mathbf{U}_j)^2$ s.t. $\mathbf{U}^T\mathbf{U}=\mathbf{I}_a$, the value of the objective function of problem $(2)$ evaluated at $\mathbf{s}^A$. We have: 
$$\sum_{i=1}^p s^o_i||\mathbf{X}_i||^2 \geq \mathbf{V}^o$$
and $$ \mathbf{V}^A \geq \sum_{i=1}^p s^A_i||\mathbf{X}_i||^2 - \eta,$$ hence: $$\sum_{i=1}^p s^o_i||\mathbf{X}_i||^2 - \sum_{i=1}^p s^A_i||\mathbf{X}_i||^2 + \eta \geq \mathbf{V}^o - \mathbf{V}^A.$$ As $\eta \geq \eta(\mathbf{s}^o)$, $s^o$ is feasible in Problem $(7)$ so $$ \sum_{i=1}^p s^o_i||\mathbf{X}_i||^2 \leq \sum_{i=1}^p s^A_i||\mathbf{X}_i||^2,$$ then $\eta \geq \mathbf{V}^o - \mathbf{V}^A$.
\medbreak
We show now that the bound is tight. We build an example for $\mathbf{X}\in\mathbb{R}^{2 \times 4}$. Consider $0<\boldsymbol\epsilon < 0.5$ and $$\mathbf{X} = \big(\begin{smallmatrix}
  -\boldsymbol\epsilon & \boldsymbol\epsilon & 1 & 1\\
  1 & 1 & 0 & 0
\end{smallmatrix}\big)$$ and consider $k=2$ and $a=1$. The columns of $\mathbf{X}$ are illustrated in Figure \ref{fig1}. Let us first note that the value of the objective function of the optimal solution of problem $(2)$ is 2 and is given by taking $\mathbf{s}^* = (1\,1\,0\,0)$. If $\eta \geq 2 \boldsymbol\epsilon$, then $\mathbf{s}^*$ is feasible in Problem $(7)$ and the optimal solution is given by $\mathbf{s}^*$ and $\mathbf{U}[\mathbf{s}^*] = \mathbf{U}_1 = (0\,1)$. If $\eta < 2\boldsymbol\epsilon$, then it is impossible to have $s_1 = s_2 = 1$; the optimal solution is then $\mathbf{s}'=(0\,0\,1\,1)$ and the value of the objective function of problem $(2)$ when $\mathbf{s}=\mathbf{s}'$ is equal to $2-2\boldsymbol\epsilon$ as $\mathbf{U}[\mathbf{s}'] = \mathbf{U}_2 = (1\,0)$. Since $\eta$ can be taken as close as desired to $2\boldsymbol\epsilon$, then the bound is tight. 
\end{proof}
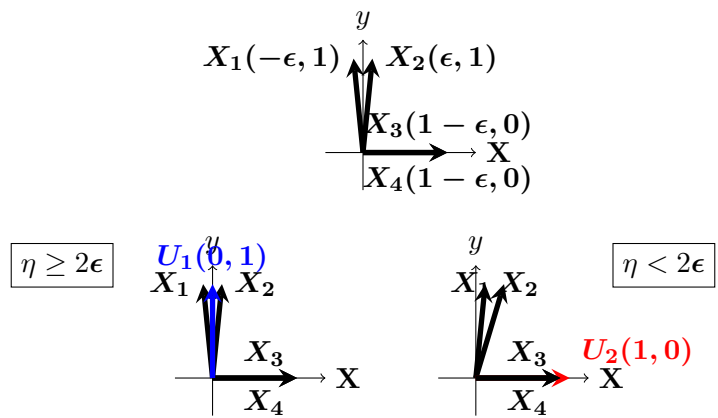
\begin{figure}
\centering
\begin{tikzpicture}
  \draw[->] (-0.5,3)--(1.5,3) node[right]{$\mathbf{X}$};
  \draw[->] (0,2.5)--(0,4.5) node[above]{$y$};
  \draw[line width=2pt,black,-stealth](0,3)--(-0.125,4.25) node[anchor= east]{$\boldsymbol{X_1(-\boldsymbol\epsilon,1)}$};
  \draw[line width=2pt,black,-stealth](0,3)--(0.125,4.25) node[anchor= west]{$\boldsymbol{X_2(\boldsymbol\epsilon,1)}$};
  \draw[line width=2pt,black,-stealth](0,3)--(1.125,3) node[anchor=south ]{$\boldsymbol{X_3(1-\boldsymbol\epsilon,0)}$};
  \draw[line width=2pt,black,-stealth](0,3)--(1.125,3) node[anchor=north ]{$\boldsymbol{X_4(1-\boldsymbol\epsilon,0)}$};
  
  \draw[->] (-2.5,0)--(-0.5,0) node[right]{$\mathbf{X}$};
  \draw[->] (-2,-0.5)--(-2,1.5) node[above]{$y$};
  \draw[line width=2pt,black,-stealth](-2,0)--(-2.125,1.25) node[anchor= east]{$\boldsymbol{X_1}$};
  \draw[line width=2pt,black,-stealth](-2,0)--(-1.875,1.25) node[anchor= west]{$\boldsymbol{X_2}$};
  \draw[line width=2pt,black,-stealth](-2,0)--(-0.875,0) node[anchor=south east]{$\boldsymbol{X_3}$};
  \draw[line width=2pt,black,-stealth](-2,0)--(-0.875,0) node[anchor=north east]{$\boldsymbol{X_4}$};
  \draw[line width=2pt,blue,-stealth](-2,0)--(-2,1.25) node[anchor=south]{$\boldsymbol{U_1(0,1)}$};
  \node[draw] at (-4,1.5) {$\eta \geq 2 \boldsymbol\epsilon$};
  
  \draw[->] (1,0)--(3,0) node[right]{$\mathbf{X}$};
  \draw[->] (1.5,-0.5)--(1.5,1.5) node[above]{$y$};
  \draw[line width=2pt,black,-stealth](1.5,0)--(1.875,1.25) node[anchor= east]{$\boldsymbol{X_1}$};
  \draw[line width=2pt,black,-stealth](1.5,0)--(1.625,1.25) node[anchor= west]{$\boldsymbol{X_2}$};
  \draw[line width=2pt,red,-stealth](1.5,0)--(2.75,0) node[anchor=south west]{$\boldsymbol{U_2(1,0)}$};
  \draw[line width=2pt,black,-stealth](1.5,0)--(2.625,0) node[anchor=south east]{$\boldsymbol{X_3}$};
  \draw[line width=2pt,black,-stealth](1.5,0)--(2.625,0) node[anchor=north east]{$\boldsymbol{X_4}$};
  \node[draw] at (4,1.5) {$\eta < 2 \boldsymbol\epsilon$};
\end{tikzpicture}
\caption{Example illustrating the tightness of Proposition 2's bound.} \label{fig1}
\end{figure}

This bound expresses the fact that the approximation is as good as the hypothesis that the matrix $\mathbf{X}(\mathbf{s}^o)$ composed of the columns of the optimal support can be approximated with a matrix of rank $a$. Even if $\eta^0$ is not known, we show in the implementation of \textbf{Algorithm 2} that this bound is practical. We prove first that a bound can be found \textit{a priori} by solving the classic PCA problem. We need first the following result:

\begin{proposition}
Consider that $\mathbf{X} = \mathbf{VV}^T\mathbf{X} + \boldsymbol\epsilon$ with $\mathbf{V}\in\mathbb{R}^{n\times a}$ and $\mathbf{V}^T\mathbf{V}=\mathbf{I}_a$. Let $(\mathbf{s}^o,\mathbf{U}[\mathbf{s}^o])$ be an optimal solution of problem $(2)$. We have:
$$ \eta(\mathbf{s}^o) \leq ||\mathbf{S}^o\boldsymbol\epsilon||^2.$$
Furthermore, $\mathbf{s}^o$ is feasible when $\eta = ||\mathbf{S}^o\boldsymbol\epsilon||^2$ in problem $(7)$.
\end{proposition}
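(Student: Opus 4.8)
The plan is to combine two observations: that $\mathbf{U}[\mathbf{s}^o]$ is, by construction, the best rank-$a$ projector for the submatrix $\mathbf{X}(\mathbf{s}^o)$, and that the frame $\mathbf{V}$ supplied by the hypothesis is a (generally suboptimal) competitor whose reconstruction residual is exactly the restriction of $\boldsymbol\epsilon$ to the support of $\mathbf{s}^o$. Comparing the optimal residual against $\mathbf{V}$'s residual then gives the bound directly.

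First I would invoke the variational characterization of $\mathbf{U}[\mathbf{s}^o]$. By the identity $\mu(\mathbf{s}) = \pi(\mathbf{s}) + \eta(\mathbf{s})$ with $\mu(\mathbf{s}) = ||\mathbf{X}(\mathbf{s})||_F^2$ independent of $\mathbf{U}$, maximizing $tr(\mathbf{U}^T\mathbf{X}(\mathbf{s})\mathbf{X}(\mathbf{s})^T\mathbf{U})$ subject to $\mathbf{U}^T\mathbf{U}=\mathbf{I}_a$ is equivalent to minimizing $||\mathbf{X}(\mathbf{s})-\mathbf{U}\mathbf{U}^T\mathbf{X}(\mathbf{s})||_F^2$. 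Hence $\mathbf{U}[\mathbf{s}^o]$ attains the minimal residual among all orthonormal frames, and in particular beats the specific $\mathbf{V}$ of the hypothesis:
\[\eta(\mathbf{s}^o) = ||\mathbf{X}(\mathbf{s}^o) - \mathbf{U}[\mathbf{s}^o]\mathbf{U}[\mathbf{s}^o]^T\mathbf{X}(\mathbf{s}^o)||_F^2 \leq ||\mathbf{X}(\mathbf{s}^o) - \mathbf{V}\mathbf{V}^T\mathbf{X}(\mathbf{s}^o)||_F^2.\]

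Second, I would evaluate the right-hand side using $\mathbf{X} = \mathbf{V}\mathbf{V}^T\mathbf{X} + \boldsymbol\epsilon$. Column by column this reads $\mathbf{X}_i - \mathbf{V}\mathbf{V}^T\mathbf{X}_i = \boldsymbol\epsilon_i$, and the column-selection operation that produces $\mathbf{X}(\mathbf{s}^o)$ commutes with the linear map $(\mathbf{I}-\mathbf{V}\mathbf{V}^T)$, so $\mathbf{X}(\mathbf{s}^o) - \mathbf{V}\mathbf{V}^T\mathbf{X}(\mathbf{s}^o)$ is precisely the matrix of residual columns $\boldsymbol\epsilon_i$ with $s_i^o=1$. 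Taking Frobenius norms,
\[||\mathbf{X}(\mathbf{s}^o) - \mathbf{V}\mathbf{V}^T\mathbf{X}(\mathbf{s}^o)||_F^2 = \sum_{i\,:\,s^o_i=1}||\boldsymbol\epsilon_i||^2 = ||\mathbf{S}^o\boldsymbol\epsilon||^2,\]
where the last equality just records that $||\mathbf{S}^o\boldsymbol\epsilon||^2$ denotes this support-restricted norm (deleting a zeroed column leaves the Frobenius norm unchanged). Chaining the two displays yields $\eta(\mathbf{s}^o) \leq ||\mathbf{S}^o\boldsymbol\epsilon||^2$. For the feasibility claim I would note that the feasible set of problem $(7)$ is exactly $\{\mathbf{s} : \mathbf{e}^T\mathbf{s}\leq k,\ \eta(\mathbf{s})\leq\eta\}$, since the cuts eliminate precisely the supports with $\eta(\mathbf{s})>\eta$. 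As $\mathbf{s}^o$ is feasible in $(2)$ it satisfies $\mathbf{e}^T\mathbf{s}^o\leq k$, and setting $\eta = ||\mathbf{S}^o\boldsymbol\epsilon||^2$ the bound just proved gives $\eta(\mathbf{s}^o)\leq\eta$; therefore $\mathbf{s}^o$ survives every cut and is feasible.

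I do not expect a genuine obstacle here: the entire argument rests on the optimality of $\mathbf{U}[\mathbf{s}^o]$ as the residual-minimizing rank-$a$ projector. The one point needing care is to establish (or explicitly cite) the min/max equivalence for $\mathbf{U}[\mathbf{s}]$ \emph{before} comparing against $\mathbf{V}$, since $\mathbf{U}[\mathbf{s}]$ is defined as a \emph{maximizer} of the projected energy rather than as a residual minimizer; the remaining work is the bookkeeping of column selection and the reading of the notation $||\mathbf{S}^o\boldsymbol\epsilon||^2$ as the support-restricted Frobenius norm of $\boldsymbol\epsilon$.
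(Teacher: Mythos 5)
Your proof is correct and is essentially the paper's own argument: both bound $\eta(\mathbf{s}^o)$ by comparing the optimal frame $\mathbf{U}[\mathbf{s}^o]$ against the competitor $\mathbf{V}$, identify the $\mathbf{V}$-residual on the selected columns with the support-restricted $\boldsymbol\epsilon$, and then read off feasibility in problem $(7)$ from $\eta(\mathbf{s}^o)\leq\eta$. The only difference is presentational — you run the inequality chain from $\eta(\mathbf{s}^o)$ upward while the paper starts from $\|\mathbf{S}^o\boldsymbol\epsilon\|_F$ and bounds it below, and you make explicit the max-trace/min-residual equivalence that the paper dismisses as ``by definition.''
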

\begin{proof}
We have: 
\begin{align*}
    ||\mathbf{S}^o\boldsymbol\epsilon||_F & = \,||\mathbf{S}^o (\mathbf{X} - \mathbf{VV}^T\mathbf{X})||_F\\
    & = \, ||\mathbf{S}^o \mathbf{X} - \mathbf{S}^o\mathbf{VV}^T\mathbf{X}||_F\\
    & = \, ||\mathbf{S}^o \mathbf{X} - \mathbf{VV}^T\mathbf{S}^o\mathbf{X}||_F \,\textrm{as}\, \mathbf{S}^o\mathbf{VV}^T = \mathbf{VV}^T\mathbf{S}^o\\
    & \geq \, ||\mathbf{X}(\mathbf{s}^o) - \mathbf{U}[\mathbf{s}^o]\mathbf{U}[\mathbf{s}^o]^T\mathbf{X}(\mathbf{s}^o)||_F \,\textrm{by definition}.\\
\end{align*}
When $\eta = ||\mathbf{S}^o\boldsymbol\epsilon||^2$, since $$||\mathbf{X}(\mathbf{s}^o) - \mathbf{U}[\mathbf{s}^o]\mathbf{U}[\mathbf{s}^o]^T\mathbf{X}(\mathbf{s}^o)||^2_F \leq ||\mathbf{S}^o\boldsymbol\epsilon||,$$ then $$||\mathbf{X}(\mathbf{s}^o) - \mathbf{U}[\mathbf{s}^o]\mathbf{U}[\mathbf{s}^o]^T\mathbf{X}(\mathbf{s}^o)||^2_F \leq \eta$$ and hence $s^o$ is feasible in Problem $(7)$.
\end{proof}
\begin{corollary}
Consider $\mathbf{V}^*\in argmin_{V\in\mathbb{R}^{n\times a}}||\mathbf{X}-\mathbf{VV}^T\mathbf{X}||\,st\,\mathbf{V}^T\mathbf{V}=\mathbf{I}_a$ (i.e., $\mathbf{V}$ is a solution to the classical PCA problem). We denote $\boldsymbol\epsilon = \mathbf{X} - \mathbf{V}^*\mathbf{V}^{*T}\mathbf{X}$ and $\sigma\subset [p]$ the set of the indices of $k$ columns of $\boldsymbol\epsilon$ with the highest norm. We have:
$$\eta(\mathbf{s}^o) \leq ||\mathbf{S}^o\boldsymbol\epsilon||^2_F \leq \sum_{i\in\sigma}||\boldsymbol\epsilon_i||^2,$$
\end{corollary}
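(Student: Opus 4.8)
The plan is to prove the two inequalities separately, chaining the immediately preceding proposition with an elementary top-$k$ selection argument.

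For the first inequality $\eta(\mathbf{s}^o) \leq ||\mathbf{S}^o\boldsymbol\epsilon||^2_F$, I would simply specialize the previous proposition. The classical PCA minimizer $\mathbf{V}^*$ satisfies $\mathbf{V}^{*T}\mathbf{V}^* = \mathbf{I}_a$, and by construction $\mathbf{X} = \mathbf{V}^*\mathbf{V}^{*T}\mathbf{X} + \boldsymbol\epsilon$ with $\boldsymbol\epsilon = \mathbf{X} - \mathbf{V}^*\mathbf{V}^{*T}\mathbf{X}$. This is exactly the hypothesis of the preceding proposition, so instantiating that result with $\mathbf{V} = \mathbf{V}^*$ delivers $\eta(\mathbf{s}^o) \leq ||\mathbf{S}^o\boldsymbol\epsilon||^2_F$ with no additional work.

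For the second inequality, I would first rewrite the middle quantity column-wise. Since $\mathbf{S}^o$ retains only the columns indexed by the support of $\mathbf{s}^o$, we have
$$||\mathbf{S}^o\boldsymbol\epsilon||^2_F = \sum_{i=1}^p s^o_i\,||\boldsymbol\epsilon_i||^2,$$
that is, the sum of the squared norms of the columns of $\boldsymbol\epsilon$ selected by $\mathbf{s}^o$. Because $\mathbf{s}^o$ is an optimal, hence feasible, solution of problem $(2)$, it obeys $\mathbf{e}^T\mathbf{s}^o \leq k$, so this is a sum over at most $k$ columns. The set $\sigma$ collects exactly the $k$ indices with the largest $||\boldsymbol\epsilon_i||^2$, which by definition maximizes $\sum_{i\in\tau}||\boldsymbol\epsilon_i||^2$ over all index sets $\tau$ with $|\tau| \leq k$. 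Hence $\sum_{i=1}^p s^o_i\,||\boldsymbol\epsilon_i||^2 \leq \sum_{i\in\sigma}||\boldsymbol\epsilon_i||^2$, giving the stated bound.

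I do not anticipate a substantive obstacle. The first inequality is a direct specialization of the previous proposition, and the second is the standard fact that a top-$k$ selection upper-bounds any size-at-most-$k$ selection of nonnegative weights. The only points to state cleanly are the column-wise identity for $||\mathbf{S}^o\boldsymbol\epsilon||^2_F$ and the feasibility constraint $\mathbf{e}^T\mathbf{s}^o \leq k$, which guarantees that $\mathbf{s}^o$ selects no more columns than $\sigma$ does.
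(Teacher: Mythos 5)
Your proposal is correct and follows exactly the route the paper intends: the paper states this corollary without proof as an immediate consequence of the preceding proposition (instantiated at the classical PCA minimizer $\mathbf{V}^*$), combined with the observation that $||\mathbf{S}^o\boldsymbol\epsilon||^2_F$ is a sum of squared column norms over the at most $k$ indices selected by $\mathbf{s}^o$, which is dominated by the top-$k$ selection $\sigma$. Your write-up simply makes these two implicit steps explicit, including the feasibility bound $\mathbf{e}^T\mathbf{s}^o \leq k$, so there is nothing to correct.
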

 $\boldsymbol\epsilon$ can be easily computed using classic PCA. Although $\boldsymbol S^o$ is not known, the norm of the columns of $\boldsymbol\epsilon$ can be computed and so is $\sum_{i\in\sigma}||\boldsymbol\epsilon_i||$. Using the same notations as in Proposition 2 and Corollary 4, we have:
\begin{corollary}
The difference between the optimal value of problem $(2)$, $||\mathbf{U}[\mathbf{s}^o]\mathbf{U}[\mathbf{s}^o]^T\mathbf{X}(\mathbf{s}^o)||_F^2$, and the value of the objective function of problem $(2)$ evaluated at $s^A$, $||\mathbf{U}[\mathbf{s}^A]\mathbf{U}[\mathbf{s}^A]^T\mathbf{X}(\mathbf{s}^A)||_F^2$ is bounded by $\sum_{i\in\sigma}||\boldsymbol\epsilon_i||^2$.
\end{corollary}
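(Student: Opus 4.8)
The plan is to obtain Corollary 5 as a direct specialization of the worst-case bound of Proposition 2, choosing the parameter $\eta$ to equal the explicitly computable quantity $\sum_{i\in\sigma}||\boldsymbol\epsilon_i||^2$ supplied by Corollary 4. First I would recall that, by the definition of $\pi$, the objective value of problem $(2)$ evaluated at any support $\mathbf{s}$ (with the inner maximization over $\mathbf{U}$ solved optimally by $\mathbf{U}[\mathbf{s}]$) equals $\pi(\mathbf{s}) = ||\mathbf{U}[\mathbf{s}]\mathbf{U}[\mathbf{s}]^T\mathbf{X}(\mathbf{s})||_F^2$. Hence the quantity the corollary asks me to bound, the gap between the optimum of $(2)$ attained at $\mathbf{s}^o$ and the value of $(2)$ evaluated at $\mathbf{s}^A$, is exactly $\pi(\mathbf{s}^o) - \pi(\mathbf{s}^A)$.

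Next I would set $\eta = \sum_{i\in\sigma}||\boldsymbol\epsilon_i||^2$, where $\boldsymbol\epsilon = \mathbf{X} - \mathbf{V}^*\mathbf{V}^{*T}\mathbf{X}$ and $\sigma$ indexes the $k$ columns of $\boldsymbol\epsilon$ of largest norm, as in Corollary 4. The crucial observation is that this choice meets the hypothesis required by Proposition 2: Corollary 4 provides the chain $\eta(\mathbf{s}^o) \leq ||\mathbf{S}^o\boldsymbol\epsilon||_F^2 \leq \sum_{i\in\sigma}||\boldsymbol\epsilon_i||^2 = \eta$, so in particular $\eta \geq \eta(\mathbf{s}^o)$. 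This is precisely the regime in which $\mathbf{s}^o$ is feasible in problem $(7)$ and in which the bound of Proposition 2 is valid.

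With this $\eta$ fixed and $\mathbf{s}^A$ an optimal solution of problem $(7)$, Proposition 2 yields directly $\pi(\mathbf{s}^o) - \pi(\mathbf{s}^A) \leq \eta = \sum_{i\in\sigma}||\boldsymbol\epsilon_i||^2$, which is the claimed bound. I would close by emphasizing that the right-hand side is computable \emph{a priori} from a single classical PCA of $\mathbf{X}$, since it depends only on the column norms of the residual $\boldsymbol\epsilon$, so the bound is available without any knowledge of the unknown optimal support $\mathbf{s}^o$.

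I expect no genuine obstacle, as the statement is a corollary obtained by chaining two already-established results. The one point that demands care is the direction of the inequality $\eta \geq \eta(\mathbf{s}^o)$: Proposition 2's bound applies only when the chosen $\eta$ dominates $\eta(\mathbf{s}^o)$, and it is exactly Corollary 4 that certifies this for the observable choice $\eta = \sum_{i\in\sigma}||\boldsymbol\epsilon_i||^2$. I would also state explicitly the identification of $\pi(\cdot)$ with the objective of $(2)$, so that the conclusion matches the phrasing of the corollary verbatim.
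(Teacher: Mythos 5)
Your proof is correct and follows exactly the route the paper intends: the corollary is stated there without a separate proof precisely because it is the composition of the worst-case bound (gap $\leq \eta$ whenever $\eta \geq \eta(\mathbf{s}^o)$) with the \emph{a priori} estimate $\eta(\mathbf{s}^o) \leq ||\mathbf{S}^o\boldsymbol\epsilon||_F^2 \leq \sum_{i\in\sigma}||\boldsymbol\epsilon_i||^2$ from the preceding corollary. Your explicit identification of the objective of problem $(2)$ with $\pi(\cdot)$ and your emphasis on verifying $\eta \geq \eta(\mathbf{s}^o)$ before invoking the bound are exactly the right points of care, and nothing further is needed.
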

\subsection{Algorithm}
We propose two algorithms. \textbf{Algorithm 1} solves Problem (7) for a given parameter $\eta$ while \textbf{Algorithm 2} starts without any knowledge on $\eta$ and then approaches the optimal solution of Problem (2) by exploring several values of $\eta$ while also updating the worst case upper bound.\medbreak

We propose an implementation based on cut generation (\textbf{Algorithm 1}). We initially solve the problem without any of the constraints $\sum_{i\in\sigma}s_i \leq |\sigma| - 1$, and then iteratively add these constraints. If at iteration $t$, the optimal solution found $\mathbf{s}^t$ violates $\eta(\mathbf{s}^t) \leq \eta$, then we add the constraint $\sum_{i\in\sigma^t}s_i \leq |\sigma^t| - 1$ such that $\sigma^t$ is the set of indices $i$ such that $s^t_i = 1$. Note that the computation of $\mathbf{U}[\mathbf{s}^t]$ is inexpensive as it involves only the generation of an SVD decomposition of the matrix $\mathbf{X}(\mathbf{s}^t)$ which is of size $n\times k$ which can in done in $\mathcal{O}(k^3 + nk^2)$. \medbreak

\begin{algorithm}[H]
\SetAlgoLined

\textbf{Input}: Data matrix $\mathbf{X}$, number of components $a$, number of variable sought $k$, parameter $\eta$\ and a set cuts (optional);
\textbf{Output}: Optimal support for the approximation formulation\;

 Initiate $(\phi)$, a BLO problem with the following formulation:
 \begin{align*}
    \max_{\boldsymbol s\in \{0,1\}^p} & \,\sum_{i=1}^ps_i||\mathbf{X}_i||^2\\
     s.t.\,&\mathbf{e}^T\mathbf{s} \leq k,\\
\end{align*}
Add the input set of cuts if there is any.
 Compute $\mathbf{s}^0$ an optimal solution of $(\phi)$ using a BLO solver\;
 Compute $\mathbf{U}[\mathbf{s}^0]$ by solving the PCA problem for matrix $\mathbf{X}(\mathbf{s}^0)$\;
 \While{$||\eta(\mathbf{s}^t) > \eta$ }{
  Update $(\phi)$ by adding the constraint $\sum_{i\in\sigma^t}s_i \leq |\sigma^t| - 1$\;
  Compute $\mathbf{s}^{t+1}$ an optimal solution of $(\phi)$ using a BLO solver\;
  Compute $\mathbf{U}[\mathbf{s}^{t+1}]$ by solving the PCA problem for matrix $\mathbf{X}(\mathbf{s}^{t+1})$\;
 }
 return the support found by solving $(\phi)$
 \caption{Constraints generation algorithm for PCs with a common support}
\end{algorithm}
While \textbf{Algorithm 1} terminates as $\{0,1\}^p$ is finite and the \textit{while} loop suppresses at least one element of $\{0,1\}^p$, the number of iterations is still potentially exponential. However, in practice the algorithm is relatively fast and solves large real world problems in minutes as illustrated in Section 4.\medbreak
We also use the proof of Theorem 4 to refine the search for $\eta(\mathbf{s}^o)$. We have:
\begin{lemma}
We assume that Problem $(7)$ has a unique optimal solution. The function $f:\eta\rightarrow f(\eta) $ returning the value of the objective function of problem $(2)$ evaluated at the optimal value of problem $(7)$ depending on $\eta$ is a piece wise constant function.
\end{lemma}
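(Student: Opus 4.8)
The plan is to exploit that the feasible region of Problem (7) depends on $\eta$ only through a finite collection of thresholds, so that the unique optimizer — and hence $f(\eta)$ — can change only at finitely many values of $\eta$.

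First I would pin down the feasible set of Problem (7) for a fixed $\eta$. By the way the cuts are generated, a vector $\mathbf{s}\in\{0,1\}^p$ with $\mathbf{e}^T\mathbf{s}\leq k$ is feasible if and only if $\eta(\mathbf{s})\leq\eta$: if $\sigma$ denotes the support of $\mathbf{s}$ and $\eta(\mathbf{s}^{\sigma})>\eta$, then $\sum_{i\in\sigma}s_i=|\sigma|$ violates the cut $\sum_{i\in\sigma}s_i\leq|\sigma|-1$, while conversely no cut is active when $\eta(\mathbf{s})\leq\eta$. Thus the feasible region is $\mathcal{F}(\eta)=\{\mathbf{s}\in\{0,1\}^p:\mathbf{e}^T\mathbf{s}\leq k,\ \eta(\mathbf{s})\leq\eta\}$, and crucially the objective $\sum_{i=1}^p s_i\|\mathbf{X}_i\|^2$ of Problem (7) does not depend on $\eta$.

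Next I would invoke finiteness. Since there are finitely many $\mathbf{s}\in\{0,1\}^p$ satisfying $\mathbf{e}^T\mathbf{s}\leq k$, the set of attained values $\{\eta(\mathbf{s}):\mathbf{e}^T\mathbf{s}\leq k\}$ is finite; I list its distinct elements as $0\leq\eta_1<\eta_2<\dots<\eta_m$. For any $\eta$ in a half-open interval $[\eta_\ell,\eta_{\ell+1})$ (with $[\eta_m,\infty)$ as the final piece), the condition $\eta(\mathbf{s})\leq\eta$ selects exactly those $\mathbf{s}$ with $\eta(\mathbf{s})\leq\eta_\ell$, since no support has an $\eta$-value strictly between $\eta_\ell$ and $\eta_{\ell+1}$. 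Hence $\mathcal{F}(\eta)$ is constant on each such interval. Because the objective of (7) is independent of $\eta$ and the optimizer is assumed unique, the optimal support $\mathbf{s}^\star$ is identical for every $\eta$ in the interval, so $f(\eta)=\pi(\mathbf{s}^\star)$ is constant there; as finitely many such intervals partition $[0,\infty)$, $f$ is piecewise constant.

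I expect the only delicate point to be the well-definedness of $f$, which is precisely where the uniqueness hypothesis enters: without it, two distinct optimizers of (7) could share the same value of $\sum_{i}s_i\|\mathbf{X}_i\|^2$ yet yield different values of $\pi$, so $f$ might fail to be a function of $\eta$ alone and the ``constant on each interval'' conclusion could break at a tie. Granted uniqueness, the remainder is the elementary observation that a combinatorial feasible set governed by a finite set of thresholds induces a locally constant optimizer, and therefore a piecewise constant $f$.
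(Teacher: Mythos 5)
Your proof is correct and takes essentially the same approach as the paper's: both reduce the $\eta$-dependence of Problem (7) to the set-valued map $\eta \mapsto \{\mathbf{s}\in\{0,1\}^p : \eta(\mathbf{s}) \leq \eta\}$, observe that it takes finitely many values and is monotone in $\eta$ (hence changes only at finitely many thresholds), and conclude that since the objective of (7) does not depend on $\eta$, the optimizer and therefore $f$ are constant between thresholds. Your write-up is simply a more explicit version of the paper's terse argument, spelling out the interval decomposition and the role of the uniqueness hypothesis.
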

\begin{proof}
We note denote the function $F$ as $F(\eta) = \{\boldsymbol s\in \{0,1\}^p | \eta(\mathbf{s}) \leq \eta\}$. For a value $\eta \in \mathbb{R}^{+}$, $F$ returns the set of supports $\mathbf{s}$ such that $\eta(\mathbf{s}) \leq \eta.$ The cardinality of $F(\eta)$ is finite as $\{0,1\}^p$ is finite. The cardinality of $F(\eta)$ is increasing with respect to $\eta$ and $f$ is constant when $F$ is which proves the lemma.
\end{proof}
Using this lemma, we can derive a method to efficiently select $\eta$. Indeed, consider that we solve problem $(7)$ for a value $\Tilde{\eta}$ and note $\mathbf{\Tilde{s}}$, an optimal solution. It is easy to see that $f(\eta)$ is constant for $\eta \in [\,\eta(\mathbf{\Tilde{s}}),\Tilde{\eta}]$. We can then start with a large value of $\eta^0$ and iteratively update $\eta^t$ by computing $\eta(\mathbf{s}^t)$ where $s^t$ is a solution obtained and then input a new value $\eta^{t+1}=\eta(\mathbf{s}^t) - \delta$ for $\delta$ small enough as $f(\eta)$ is constant on $[\,\eta(\mathbf{s}^t),\eta^{t}]$.\medbreak
This also provides means to obtain tighter optimality gaps. Indeed, using Proposition 3 and starting with $\eta^0$, which is large enough, we can store and update $\eta^*$, the value of $\eta$ that achieves the largest value $f(\eta)$ and since we have started with values of $\eta$ larger than the $\eta(\mathbf{s}^o)$ corresponding to the optimal solution of problem $(2)$, then according to Proposition 3, the difference between the optimal solution of problem $(2)$ and $f(\eta^*)$ is capped by $\eta^*$.\medbreak
We report then the following algorithm to solve or approximate problem $(2)$ and obtain $\eta^*$.\medbreak
\begin{algorithm}[H]
\SetAlgoLined
\textbf{Input}: Data matrix $\mathbf{X}$, number of components $a$, number of variable sought $k$ and an integer $\lambda$ \;
\textbf{Output}: Support $s^*$ approximating the solution and $\eta^*$\;

 Initiate with $\eta^0$ and $\eta^*$ large enough\;
 c:=0\;
 \While{$c \leq \lambda$}{
  Run Algorithm 1 using $\eta^t$\ and cuts generated so far;
  \uIf{$f(\eta^*) < f(\eta^t)$}{
  $\eta^*$ := $\eta^t$\;
  }
  $\eta^{t+1}$ :=  $\eta(\mathbf{s}^t) - \delta$\;
  c:= c + 1\;
 }
 return $\mathbf{s}^*, \eta^*, f(\eta^*)$
 \caption{Approximate solution for problem $(2)$}
\end{algorithm}
\begin{proposition}
Algorithm 2 converges and for $\delta > 0$ small enough, Algorithm 2 finds an optimal solution to (2) in a finite number of steps for $\lambda$ large enough.
\end{proposition}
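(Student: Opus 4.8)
The plan is to treat the two claims separately. Convergence is immediate: the \textbf{while} loop runs exactly $\lambda+1$ times (the counter $c$ goes from $0$ to $\lambda$), and each pass invokes Algorithm 1, which terminates because $\{0,1\}^p$ is finite and every inner iteration excises at least one support; hence Algorithm 2 halts after finitely many steps. For the optimality claim I first fix the finite ladder of residual values: let $v_1 < \dots < v_M$ be the distinct values of $\eta(\mathbf{s})$ over feasible supports ($\mathbf{e}^T\mathbf{s}\le k$), let $\gamma = \min_j(v_{j+1}-v_j)>0$, and take any $\delta\in(0,\gamma)$ — this is what ``$\delta$ small enough'' means. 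Fix an optimal $\mathbf{s}^o$ of $(2)$, write $v_* = \eta(\mathbf{s}^o)$ and $V^o=\pi(\mathbf{s}^o)$, and initialise $\eta^0$ large enough that $\mathbf{s}^o$ is feasible in $(7)$, i.e. $\eta^0\ge v_*$ (Corollary 4 furnishes such a value a priori).

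Next I would follow the sequence $\eta(\mathbf{s}^t)$. Since $\mathbf{s}^{t+1}$ is feasible for $(7)$ at level $\eta^{t+1}=\eta(\mathbf{s}^t)-\delta$, we have $\eta(\mathbf{s}^{t+1})\le\eta(\mathbf{s}^t)-\delta<\eta(\mathbf{s}^t)$, so $\eta(\mathbf{s}^t)$ is strictly decreasing; living in the finite set $\{v_j\}$, it must reach a value $\le v_*$ after at most $M$ iterations. Let $t_0$ be the first index with $\eta(\mathbf{s}^{t_0})\le v_*$. A short check using $\delta<\gamma$ shows the probe level is still at least $v_*$ there: if $t_0=0$ this is the initialisation, and if $t_0\ge1$ then $\eta(\mathbf{s}^{t_0-1})>v_*$ is a ladder value, so $\eta(\mathbf{s}^{t_0-1})\ge v_*+\gamma$ and $\eta^{t_0}=\eta(\mathbf{s}^{t_0-1})-\delta>v_*$.

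The crux is a single inequality at iteration $t_0$. Because $\eta^{t_0}\ge v_*=\eta(\mathbf{s}^o)$, the support $\mathbf{s}^o$ is feasible in $(7)$ at level $\eta^{t_0}$, so the $(7)$-optimal $\mathbf{s}^{t_0}$ obeys $\mu(\mathbf{s}^{t_0})\ge\mu(\mathbf{s}^o)$, while by the choice of $t_0$ we have $\eta(\mathbf{s}^{t_0})\le v_*=\eta(\mathbf{s}^o)$. Invoking the identity $\mu=\pi+\eta$ then gives $\pi(\mathbf{s}^{t_0})=\mu(\mathbf{s}^{t_0})-\eta(\mathbf{s}^{t_0})\ge\mu(\mathbf{s}^o)-\eta(\mathbf{s}^o)=\pi(\mathbf{s}^o)=V^o$. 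Since $\mathbf{s}^{t_0}$ is feasible for $(2)$ we also have $\pi(\mathbf{s}^{t_0})\le V^o$, so $\pi(\mathbf{s}^{t_0})=V^o$ and $\mathbf{s}^{t_0}$ is optimal for $(2)$; equivalently $f(\eta^{t_0})=V^o$. This is precisely the realisation of the band guaranteed by Theorem 4, reached constructively.

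To conclude, I note $f(\eta^t)\le V^o$ for every $t$ because each $\mathbf{s}^t$ is feasible for $(2)$, and Algorithm 2 stores in $\eta^*$ the probe attaining the largest $f$-value; hence for any $\lambda\ge t_0$ — and $t_0\le M$ is finite — the returned value satisfies $f(\eta^*)=V^o$ and its support solves $(2)$, all within finitely many steps. I expect the bookkeeping of the probe levels, rather than the core inequality, to be the main obstacle: one must rule out the iteration ``stepping over'' $v_*$ (handled by $\delta<\gamma$ in the $t_0\ge1$ case) and confirm that cuts accumulated at earlier, larger values of $\eta$ stay valid at the current smaller $\eta$ — they only remove supports whose residual exceeds the current threshold — so that every call to Algorithm 1 indeed returns a $(7)$-optimum.
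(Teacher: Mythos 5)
Your proof is correct and follows essentially the same route as the paper's: choose $\delta$ below the minimum gap between attainable residual values $\eta(\mathbf{s})$, follow the strictly decreasing probe sequence, and at the first crossing of $\eta(\mathbf{s}^o)$ combine $\mu(\mathbf{s}^{t_0})\ge\mu(\mathbf{s}^o)$ with $\eta(\mathbf{s}^{t_0})\le\eta(\mathbf{s}^o)$ via the identity $\pi=\mu-\eta$ to conclude optimality. Your explicit bookkeeping of the crossing index $t_0$ and the check that cuts generated at larger values of $\eta$ remain valid at smaller ones make rigorous some steps the paper states loosely (its case analysis contains index and inequality slips), but the underlying argument is the same.
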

\begin{proof}
We first note that Algorithm 2 converges as $f(\eta^*)$ increases from an iteration to another by construction and is upper-bounded by $||X||_F^2$. \medbreak
We consider now $\mathbf{s}^o$ an optimal solution of (2), and let $\Delta = \{ |\eta(\mathbf{s}) - \eta(\mathbf{s'})| > 0,s.t. \mathbf{e}^T\mathbf{s} = \mathbf{e}^T\mathbf{s'} = k; \mathbf{s,s'}\in\{0,1\}^p\}$. Suppose that $\Delta$ is not empty. Consider any $\delta>0$ such that $\delta < min(\Delta)$. Considering Algorithm 2, we know that $\eta^0 > \eta(\mathbf{s}^o)$. If $f(\eta^0) = f(\eta(\mathbf{s}^o))$ then Algorithm 2 finds an optimal solution $\mathbf{s}^0$ at the first iteration as $f(\eta(\mathbf{s}^o))$ is an optimal solution to problem (2) according to Theorem 4. Consider now $t$ such that $\eta^t > \eta(\mathbf{s}^o)$ such that $f(\eta^t) < f(\eta(\mathbf{s}^o))$. We show now that $f(\eta^{t+1}) = f(\eta(\mathbf{s}^o))$ or $\eta^{t+1}> \eta^{o}$.  If $\eta^{t+1} < \eta(\mathbf{s}^o)$, then $\mathbf{s}^o$ is feasible in (7) and $\sum_{i=1}^ps^{t+1}_i||\mathbf{X}_i||^2\geq \sum_{i=1}^ps^{o}_i||\mathbf{X}_i||^2$. We also have by construction of $\delta$, $\eta(\mathbf{s}^{t+1}) \leq \eta(\mathbf{s}^o)$. Since $f(\eta(s)) = \sum_{i=1}^ps_i||\mathbf{X}_i|| + \eta(s)$, we have $f(\eta^{t+1}) \geq f(\eta(\mathbf{s}^o))$ and since $f(\eta(\mathbf{s}^o))$ is the optimal value of (2), then $f(\eta^{t+1}) = f(\eta(\mathbf{s}^o))$ meaning that Algorithm 2 finds an optimal solution $\mathbf{s}^{t+1}$. \medbreak
If $\eta^{t+1} = \eta(\mathbf{s}^o)$, then $f(\eta^{t+1}) = f(\eta(\mathbf{s}^o))$ according to Theorem 4. We conclude that if $f(\eta^{t+1})<f(\eta(\mathbf{s}^o))$ then $\eta^{t+1} < \eta(\mathbf{s}^o)$.\medbreak Hence, since $\eta^{t+1} - \eta^{t+2} > \delta > 0$ by construction, Algorithm 2 finds an optimal solution in less than $(\eta^0 - \eta(\mathbf{s}^o))/\delta$ steps. \medbreak
If $\Delta = \emptyset$, then an optimal solution of (7) is also an optimal solution to (2) as all $\eta(\mathbf{s})$ are equal and the Algorithm 2 finds the optimal solution at the first iteration.
\end{proof}

We note that the number of values $f(\eta)$ takes is lower than the number of feasible solution, which contributes to simplifying the problem. The condition $c \leq \lambda$ in the \textit{while} loop is a stopping criteria. It stops the search of a solution if no improvement is achieved after $\lambda$ attempts after $\eta^*$ is updated. Of course, other stopping criteria could be considered in this algorithm.\medbreak
We finally provide an additional upper-bound for the optimal solution of (2) that leverages on the history of search of Algorithm 2.
\begin{proposition}
Consider that Algorithm 2 generated $\theta$ cuts and note $s^\theta$ an optimal solution of (7) using the $\theta$ cuts generated. If $f(\eta^*) \geq \sum_{i=1}^ps^\theta_i||\mathbf{X}_i||^2$, then $s^*$ is an optimal solution for (2). Otherwise, the optimal solution of (2) is upper-bounded by $\sum_{i=1}^ps^\theta_i||\mathbf{X}_i||^2$.
\end{proposition}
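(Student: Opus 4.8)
The plan is to reduce the whole statement to the single inequality $\mu(\mathbf{s}^\theta) \geq V^o$, where $\mu(\mathbf{s}^\theta) = \sum_{i=1}^p s^\theta_i ||\mathbf{X}_i||^2$ and $V^o = \pi(\mathbf{s}^o) = \sum_{i=1}^p s^o_i\sum_{j=1}^a(\mathbf{X}_i^T\mathbf{U}[\mathbf{s}^o]_j)^2$ is the optimal value of Problem $(2)$. First I would record the easy half: since $\mathbf{s}^*$ is feasible for Problem $(2)$ and $f(\eta^*)=\pi(\mathbf{s}^*)$ is the value of the objective of $(2)$ at $\mathbf{s}^*$, we always have $f(\eta^*) \leq V^o$. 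Granting the key inequality, both cases then drop out: if $f(\eta^*) \geq \mu(\mathbf{s}^\theta)$ then $f(\eta^*) \geq \mu(\mathbf{s}^\theta) \geq V^o \geq f(\eta^*)$, so all four quantities coincide, $\pi(\mathbf{s}^*) = V^o$, and $\mathbf{s}^*$ solves $(2)$; otherwise the inequality $V^o \leq \mu(\mathbf{s}^\theta)$ is exactly the claimed upper bound.

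The core is therefore to prove $\mu(\mathbf{s}^\theta) \geq V^o$, which I would split into a pointwise step and a feasibility step. The pointwise step uses the identity $\mu(\mathbf{s}) = \pi(\mathbf{s}) + \eta(\mathbf{s})$ from the Notations together with $\eta(\mathbf{s}) \geq 0$, giving $\mu(\mathbf{s}) \geq \pi(\mathbf{s})$ for every $\mathbf{s}$; evaluated at $\mathbf{s}^o$ this reads $\mu(\mathbf{s}^o) \geq \pi(\mathbf{s}^o) = V^o$. The feasibility step is to show that $\mathbf{s}^o$ survives all $\theta$ cuts, i.e. $\mathbf{s}^o$ is feasible for Problem $(7)$ with those cuts; since $\mathbf{s}^\theta$ maximizes $\mu$ over that feasible set, this yields $\mu(\mathbf{s}^\theta) \geq \mu(\mathbf{s}^o) \geq V^o$ and finishes the argument.

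The hard part is the feasibility step, because the cuts accumulate along the decreasing sequence of thresholds $\eta^t$ produced by Algorithm 2. Each cut $\sum_{i\in\sigma}s_i \leq |\sigma| - 1$ is introduced by Algorithm 1 only for a support violating $\eta(\mathbf{s}^\sigma) > \eta^t$ at the current threshold, and such a cut removes $\mathbf{s}^o$ only when $\sigma = \mathrm{supp}(\mathbf{s}^o)$, which forces $\eta(\mathbf{s}^o) > \eta^t$, i.e. a threshold already strictly below $\eta(\mathbf{s}^o)$. I would argue this cannot occur before an optimal support has been recorded: while $\eta^t \geq \eta(\mathbf{s}^o)$ the support $\mathbf{s}^o$ stays feasible and is never cut, so the bound above applies verbatim; and by the convergence Proposition and Theorem 4, the first iteration whose threshold enters the window $[\eta(\mathbf{s}^o), \eta(\mathbf{s}^o) + \delta]$ already returns a support with objective $\pi = V^o$, so that $f(\eta^*) = V^o$ is stored at that moment and the algorithm has landed in the first case. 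The only remaining checks are the monotone decrease of the thresholds $\eta^t$ and the pointwise inequality $\mu \geq \pi$, both of which are immediate; assembling them with the dichotomy of the first paragraph gives the proposition.
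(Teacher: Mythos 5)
Your skeleton---the dichotomy reduction plus the key inequality $\mu(\mathbf{s}^\theta)\geq V^o$---is sound, and it actually isolates the crux more cleanly than the paper's own proof (which argues via monotonicity of $\psi(\theta)=\sum_{i=1}^p s^\theta_i\|\mathbf{X}_i\|^2$ in $\theta$ and the pointwise bound $\pi\leq\mu$, and buries the dangerous case in an unproved ``by construction'' claim). But your feasibility step, which you correctly flag as the hard part, does not go through as written. The claim that ``the first iteration whose threshold enters the window $[\eta(\mathbf{s}^o),\eta(\mathbf{s}^o)+\delta]$ already returns a support with objective $\pi=V^o$'' presupposes that some $\eta^t$ lands in that window, and Algorithm 2 gives no such guarantee: the update is $\eta^{t+1}=\eta(\mathbf{s}^t)-\delta$, so the thresholds can jump from far above the window to strictly below $\eta(\mathbf{s}^o)$ in one step (for instance, whenever a run at a large threshold returns a support with $\eta(\mathbf{s}^t)=\eta(\mathbf{s}^o)$, the next threshold is $\eta(\mathbf{s}^o)-\delta$ and the window is skipped entirely). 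So Theorem 4, which speaks only about thresholds inside the window, cannot be invoked. The fact you need is the one inside the proof of the convergence proposition, and it requires its standing assumption that $\delta$ is smaller than every nonzero gap $|\eta(\mathbf{s})-\eta(\mathbf{s}')|$: the \emph{last} run whose threshold satisfies $\eta^t\geq\eta(\mathbf{s}^o)$ already returns an optimal support. Indeed, $\mathbf{s}^o$ is still uncut there, so $\mu(\mathbf{s}^t)\geq\mu(\mathbf{s}^o)$; the threshold dropping below $\eta(\mathbf{s}^o)$ at the next step forces $\eta(\mathbf{s}^t)<\eta(\mathbf{s}^o)+\delta$; optimality of $\mathbf{s}^o$ in $(2)$ rules out $\eta(\mathbf{s}^t)<\eta(\mathbf{s}^o)$ (else $\pi(\mathbf{s}^t)>V^o$); and the gap condition then forces $\eta(\mathbf{s}^t)=\eta(\mathbf{s}^o)$, whence $\pi(\mathbf{s}^t)\geq\mu(\mathbf{s}^o)-\eta(\mathbf{s}^o)=V^o$, i.e. $\pi(\mathbf{s}^t)=V^o$ is recorded. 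Without some such smallness hypothesis on $\delta$ the step is not merely unproven but false: a large $\delta$ lets the threshold overshoot, $\mathbf{s}^o$ is cut before any run records $V^o$, and $\mu(\mathbf{s}^\theta)$ can then fall below $V^o$.

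There is also a logical slip at the end: from ``$f(\eta^*)=V^o$ is stored'' you conclude that ``the algorithm has landed in the first case,'' i.e. $f(\eta^*)\geq\mu(\mathbf{s}^\theta)$. That does not follow---right after $\mathbf{s}^o$ is cut, the surviving supports can still include ones with norm sum above $V^o$ but large residual, so $\mu(\mathbf{s}^\theta)>f(\eta^*)=V^o$ is possible and the algorithm sits in the ``otherwise'' branch. The repair is one line: once $f(\eta^*)=V^o$, \emph{both} conclusions of the proposition hold whichever case occurs, since in the first case $\mathbf{s}^*$ attains $V^o$ and in the second case $V^o=f(\eta^*)<\mu(\mathbf{s}^\theta)$ is exactly the claimed upper bound. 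With that sentence replaced, and the window argument replaced by the last-run argument above, your decomposition yields a complete proof---and, unlike the paper's, one that makes explicit where the optimal support being cut must be confronted.
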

\begin{proof}
We note $s^\theta$ a solution of (7) after generating $\theta$ cuts using Algorithm 2, $\phi(\theta)$ the value taken by the objective function of (2) at $s^\theta$ and $\psi(\theta) = \sum_{i=1}^ps^\theta_i||\mathbf{X}_i||^2$, the value objective function of (7) at the same point $s^\theta$. We first note that $\psi$ is a decreasing function. Indeed, the optimal value of (7) can only decrease when we add constraints. We also note that for any $\theta$, $\phi(\theta)\leq\psi(\theta)$. \medbreak 
If  $f(\eta^*)\geq \sum_{i=1}^ps^\theta_i||\mathbf{X}_i||^2 = \psi(\theta)$, then for any $\theta' \geq \theta$, $f(\eta^*) >= \psi(\theta')$ as $\psi$ is decreasing; and since $\phi(\theta')\leq\psi(\theta')$, then $f(\eta^*)\geq \psi(\theta')$ so $f(\eta^*)$ is optimal.\medbreak
Otherwise, by construction, $f(\eta^*) \geq \phi(\theta'')$ for any $\theta'' \leq \theta$. We also have for any $\theta' \geq \theta$, $\psi(\theta) \geq \psi(\theta')$ ans $\psi$ is decreasing and we also have  $\phi(\theta')\leq\psi(\theta')$ so $\psi(\theta) \geq \phi(\theta')$ so the optimal solution of (2) is indeed upper-bounded by $\sum_{i=1}^ps^\theta_i||\mathbf{X}_i||^2 = \psi(\theta)$.
\end{proof}
\subsection{Complexity assessment}
There are two aspects that need to be considered to assess the theoretical efficiency of the proposed algorithms; the number of cuts generated) and the computational cost of each iteration.\medbreak
Let us start by the number of cuts. As mentioned earlier, the number of cuts is potentially exponential. We provide here a theoretical example in which the number of cuts generated is exponential whatever the dimension $p$ chosen. We choose $k=2$ and $a=1$ and consider $\alpha = 5/24$ and $n = p$. Consider the matrix $\mathbf{X}\in\mathbb{R}^{n \times p}$ such that $x_{1,1} = 1$, $x_{1,2} = 1$, $x_{1,j} = 1 - \alpha$ for $j\geq 3$, $x_{i,i} = 2\alpha$ and $x_{i,i+1} = -2\alpha$ for $i\geq 3$, $x_{2,p} = -2\alpha$ and $x_{i,j} = 0$ otherwise. It is easy to verify that the norm of any column $\mathbf{X}_j$ is strictly greater than 1 for $j \geq 1$ and the norm of the two first columns is equal to 1. Is is also easy to verify that pair of columns $(i,j) \neq (1,2), (2,1)$, $\mathbf{U}^T\mathbf{X}(ij)^T\mathbf{X}(ij)\mathbf{U} < 2$ where $\mathbf{X}(ij)$ is a sub-matrix of $\mathbf{X}$ formed by its $i$ and $j$ columns and $\mathbf{U}\in\mathbb{R}^p$ with $||U|| = 1$. It is then easy to verify that the optimal solution $s^o$ of (2) is achieved with $s_1=s_2=1$ and $s_i = 0$ for $i\geq 3$. We set now $\eta = 0.00001$, then the only feasible solution for (7) is $s^o$. Since $||\mathbf{X}_i||^2 + ||\mathbf{X}_j||^2 > 2$ for any $(i,j) \neq (1,2), (2,1)$, and the cuts generated cut one binary point at a time, then Algorithm 1 will have to cut all the binary points except the optimal solution before it reaches the optimal solution.\medbreak
We examine now the cost of each iteration. Each iteration involves two steps (i) solving a BLO problem and (ii) a running a separation algorithm. As mentioned earlier, the separation algorithm consists of a classic PCA problem of dimension $n\times k$ which can be solved by using an SVD decomposition in $\mathcal{O}(k^3 + nk^2)$. Solving the BLO problem in Algorithm 1 is not trivial and the constraints matrix defines a polyhedron that has non-integer vertices in the general case. Yet, we can show that the BLO problem can be replaced by a much simpler algorithm. Indeed, at the first iteration of Algorithm 1 (no cut generated yet), the solution is obtained by choosing the $k$ columns of $\mathbf{X}$ that have the largest norms. Suppose now that we are at iteration $t$. If the separation algorithm finds a violated cut at iteration $t-1$, the new cut generated cuts exactly one binary point which is the optimal solution found of iteration $t-1$. This means that a solution of the BLO at iteration $t$ is defined by a set of vectors  with the largest sum of norms that are different from the solutions of iterations $1,2,3,..., t-1$. Since the BLO chooses the sets with largest sums of norms, a solution for the BLO of iteration $t$ has a sum of norms lower or equal to the sum of the binary solutions that were cut. Finding such solution could then be performed through tree search with a cost of $\mathcal{O}(k)$ as this process is then reduced finding the $k$ columns with the largest norms, then the set with the second largest sum of norms,..., then the set with the $t^{th}$ largest sum of norms. The actual computational cost of each iteration is then $\mathcal{O}(k^3 + nk^2)$.\medbreak
Since the cost of each iteration is modest, theoretically the potential issue would be with the number of iteration that could be theoretically exponential. Yet, in practice, high quality solutions are found in a tractable fashion for fairly large instances as we show in the following section using real-life data.\medbreak
In addition, it is worth noting that our method does not require computing the covariance matrix $\mathbf{X}^T\mathbf{X}$. Since we are interested in particular in instances in which $p$ is large (and in general high dimension setting we have $n\ll p$), this results in dramatic reduction in memory requirements in addition to reductions in computation time.\medbreak
Finally, although solving the BLO is computationally costly in theory, we have found that it is actually fast in practice when it comes to the BLO proposed in Algorithm 1. In Section 4, we illustrate how Algorithm 2 performs vs. other methods. We keep using the BLO because (i) solving the BLO using commercial solvers is so fast that Algorithm 2 already scales more than any other known method that tackles the problem considered in this section, (ii) the BLO could hold the opportunity to add tighter cuts (for potential future research) and (iii) too often, solving is deemed computationally costly while in practice it can be competitive, so we decided to illustrate the fact that considering BLO could be a valid approach to tackle high dimension problems.
\section{Extension to the Disjoint Support Block Sparse PCA and the Structured Sparse PCA problems}
We consider now the problem of maximizing the variance generated by $b$ sets of PCs such that the PCs of a same set share the same support while the supports of PCs of different sets are disjoint. We call this problem the Disjoint Support Block Sparse PCA (DSB SPCA) problem.

\subsection{DSB SPCA Formulation}
 We note $(k_i)_{i\in [b]}$ the desired cardinality of supports for each set and $(a_i)_{i\in [k]}$ the number of PCs in each set. Following the same method to formulate  problem $(3)$, the problem can be written as follows:
\begin{align*}
\tag{8}
    \max_{\mathbf{s}_i\in \{0,1\}^{p},(\mathbf{W}_i\in \mathbb{R}^{p\times a_i});i\in[b]} & \,\sum_{i=1}^b tr(\mathbf{W}_i^T\mathbf{S}_i\mathbf{X}^T\mathbf{XS}_i\mathbf{W}_i)\\
    s.t.&\,\sum_{j=1}^p s_{ij} \leq k_i,\forall i\in [b],\\
    & \mathbf{W}_i^T\mathbf{S}_i\mathbf{W}_i= I_{a_i},\forall i\in [b],\\
    & \mathbf{W}_i^T\mathbf{W}_i = I_{a_i},\forall i\in [b],\\
    & \sum_{i=1}^ps_{ij} \leq 1, \forall j \in [p].
\end{align*}
We first note that when $\forall i\in [b],a_i=1$, then formulation $(5)$ matches the sparse PCA problem with disjoint support as formulated in  \citep{ryan}. Following the same steps to construct problem $(7)$, we obtain the following approximation for $(5)$:
\begin{align*}
\tag{9}
    \max_{\mathbf{s}_i\in \{0,1\}^p;i\in[b]} & \,\sum_{i=1}^b\sum_{j=1}^ps_{ij}||\mathbf{X}_i||^2\\
     s.t.&\,\sum_{j=1}^p s_{ij} \leq k_i,\forall i\in[b],\\
    & \forall \sigma \subset [p], \forall i\in[b], ||\mathbf{X}(\mathbf{s}_i^{\sigma}) - \mathbf{U}[\mathbf{s}_i^{\sigma}]\mathbf{U}[\mathbf{s}_i^{\sigma}]^T\mathbf{X}(\mathbf{s}_i^{\sigma})||_F^2 > \eta\\
    &\Rightarrow \sum_{j\in\sigma}s_{ij} \leq |\sigma| - 1,  \\
    & \sum_{i=1}^ps_{ij} \leq 1, \forall j \in [p].
\end{align*}
\subsection{Worst Case Scenario Upper Bound}
We generalize Proposition 2 as follows:
\begin{proposition}
Let $(\mathbf{s}_l^o,\mathbf{U}[\mathbf{s}_l^o])_{l\in[b]}$ be an optimal solution of Problem (8). We note $$\eta(\mathbf{s}^o) =\sum_{l =1}^b||\mathbf{X}(\mathbf{s}_l^o) - \mathbf{U}[\mathbf{s}_l^o]\mathbf{U}[\mathbf{s}_l^o]^T\mathbf{X}(\mathbf{s}_l^o)||^2_F$$Consider $(\mathbf{s}_l^A)_{l\in[b]}$ an optimal solution of Problem (9) for $\eta = \sum_{l = 1}^b \eta_l \geq \eta(\mathbf{s}^o)$, then the difference between the optimal value of $(8)$,$$\sum_{l = 1}^b\sum_{i=1}^ps^o_{li}\sum_{j = 1}^{a_l}(\mathbf{X}_i^T\mathbf{U}[\mathbf{s}_l^o]_j)^2,$$ and the value of the objective function of $(8)$ evaluated at $(\mathbf{s}_l^A)_{l\in[b]}$, $$ \sum_{l = 1}^b \sum_{i=1}^ps^A_{li}\sum_{j = 1}^{a_l}(\mathbf{X}_i^T\mathbf{U}[\mathbf{s}_l^A]_j)^2$$ is bounded by $\eta$. Furthermore, this bound is tight.
\end{proposition}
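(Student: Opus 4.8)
The plan is to replicate the one-block argument of Proposition 2 blockwise and then sum over the $b$ blocks, exploiting the fact that both the true objective of $(8)$ and the linear surrogate objective of $(9)$ decompose additively across blocks. First I would fix notation: write $V^o$ for the optimal value of $(8)$, $V^A$ for the objective of $(8)$ evaluated at $(\mathbf{s}_l^A)_{l\in[b]}$, and recall that for every block the identity $\mu(\mathbf{s}_l)=\pi(\mathbf{s}_l)+\eta(\mathbf{s}_l)$ holds, where $\mu(\mathbf{s}_l)=\sum_i s_{li}\|\mathbf{X}_i\|^2$ is the block's contribution to the linear objective of $(9)$, $\pi(\mathbf{s}_l)$ is its contribution to the true objective of $(8)$, and $\eta(\mathbf{s}_l)$ is the rank-$a_l$ residual.

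Three inequalities, summed over $l$, then drive the bound exactly as in Proposition 2. First, since an orthogonal projection never increases the Frobenius norm, $\pi(\mathbf{s}_l^o)\le\mu(\mathbf{s}_l^o)$, hence $V^o=\sum_l\pi(\mathbf{s}_l^o)\le\sum_l\mu(\mathbf{s}_l^o)$. Second, feasibility of $(\mathbf{s}_l^A)$ in $(9)$ forces $\eta(\mathbf{s}_l^A)\le\eta_l$ blockwise, so $V^A=\sum_l\pi(\mathbf{s}_l^A)=\sum_l\mu(\mathbf{s}_l^A)-\sum_l\eta(\mathbf{s}_l^A)\ge\sum_l\mu(\mathbf{s}_l^A)-\eta$. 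Third, because $\eta_l\ge\eta(\mathbf{s}_l^o)$ and $\eta(\cdot)$ is monotone nondecreasing under adding columns (a consequence of singular-value interlacing, so no cut of $(9)$ is triggered by any subset of the support of $\mathbf{s}_l^o$), the point $(\mathbf{s}_l^o)$ is feasible for $(9)$; since $(\mathbf{s}_l^A)$ maximizes $\sum_l\mu(\mathbf{s}_l)$ over that feasible region, $\sum_l\mu(\mathbf{s}_l^A)\ge\sum_l\mu(\mathbf{s}_l^o)$. Combining the three yields $V^o-V^A\le\sum_l\mu(\mathbf{s}_l^o)-\sum_l\mu(\mathbf{s}_l^A)+\eta\le\eta$.

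For tightness I would build a block-diagonal instance that stacks $b$ scaled copies of the $\mathbb{R}^{2\times4}$ example of Proposition 2, with block $l$ occupying its own two rows and four columns and carrying parameter $\boldsymbol\epsilon_l$, with $k_l=2$ and $a_l=1$. Because supports and row spaces are disjoint, the left eigenvectors, projections and residuals decouple across blocks, and the disjoint-support constraint $\sum_i s_{ij}\le1$ is automatically satisfied; each block then independently reproduces the single-block gap $2\boldsymbol\epsilon_l$ whenever $\eta_l<2\boldsymbol\epsilon_l$. Letting $\eta_l\uparrow 2\boldsymbol\epsilon_l$ drives the total gap to $\sum_l 2\boldsymbol\epsilon_l=\eta$, establishing tightness.

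The main obstacle is the per-block bookkeeping of the threshold split $\eta=\sum_l\eta_l$: one must verify that the single hypothesis $\eta(\mathbf{s}^o)=\sum_l\eta(\mathbf{s}_l^o)\le\eta$ can be realized by thresholds $\eta_l\ge\eta(\mathbf{s}_l^o)$ (for instance $\eta_l=\eta(\mathbf{s}_l^o)$ with the slack distributed arbitrarily), so that step three genuinely certifies feasibility of $(\mathbf{s}_l^o)$ in $(9)$ and step two loses only the total $\eta$ rather than $b\eta$. The coupling introduced by $\sum_i s_{ij}\le1$ must also be handled, but it merely restricts the common feasible region over which $(\mathbf{s}_l^A)$ is optimal and $(\mathbf{s}_l^o)$ is feasible, leaving the optimality comparison in step three intact.
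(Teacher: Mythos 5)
Your three-inequality argument for the bound is exactly the paper's proof: $\sum_l\mu(\mathbf{s}_l^o)\ge V^o$, then $V^A\ge\sum_l\mu(\mathbf{s}_l^A)-\eta$ from feasibility of $(\mathbf{s}_l^A)$ in (9), then $\sum_l\mu(\mathbf{s}_l^A)\ge\sum_l\mu(\mathbf{s}_l^o)$ from feasibility of $(\mathbf{s}_l^o)$ in (9), combined to give $V^o-V^A\le\eta$. Where you differ is in making explicit two points the paper passes over in silence, and both are to your credit. The paper merely asserts that $(\mathbf{s}_l^o)_{l\in[b]}$ is feasible in (9); you correctly note that this requires (i) monotonicity of $\eta(\cdot)$ under adding columns, so that no cut indexed by a strict subset of an optimal support is triggered (this is also needed, and also omitted, in the single-block Proposition 5), and (ii) the blockwise inequalities $\eta_l\ge\eta(\mathbf{s}_l^o)$, which do \emph{not} follow from the aggregate hypothesis $\sum_l\eta_l\ge\eta(\mathbf{s}^o)$ as stated; the proposition is only proved, by the paper and by you, under the blockwise reading, which is what Algorithm 3's per-block thresholds suggest is intended, and your explicit choice $\eta_l=\eta(\mathbf{s}_l^o)$ plus arbitrary slack is the right way to repair this. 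Finally, the paper's proof of this proposition silently drops the tightness claim altogether (it is argued only for the single-block case), so your block-diagonal stacking of $b$ scaled copies of that $\mathbb{R}^{2\times 4}$ example is a genuine addition; note, however, that it inherits the defect of the paper's own example: the per-block gap $2\boldsymbol\epsilon_l$ is realized in the regime $\eta_l<2\boldsymbol\epsilon_l$, i.e., $\eta_l<\eta(\mathbf{s}_l^o)$, which lies outside the proposition's hypothesis. A clean tightness instance should instead fix $\eta_l=\eta(\mathbf{s}_l^o)$ and make the competing (zero-residual) support's column norms nearly tied with those of the optimal support, so that the approximation selects the optimal-norm support while its projected value falls short by almost $\eta_l$.
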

\begin{proof}
Let $\mathbf{V}^o$ be the optimal value of Problem $(8)$, $(\mathbf{s}_l^A)_{l\in[b]}$ be a solution to  Problem $(9)$ and $$ \sum_{l = 1}^b \sum_{i=1}^ps^A_{li}\sum_{j = 1}^{a_l}(\mathbf{X}_i^T\mathbf{U}[\mathbf{s}_l^A]_j)^2,$$ be the value of the objective function of Problem (8) evaluated at $(\mathbf{s}_l^A)_{l\in[b]}$. We have $$\sum_{l = 1}^b\sum_{i=1}^p s^o_{li}||\mathbf{X}_i||^2 \geq \mathbf{V}^o$$ and $$ \mathbf{V}^A \geq \sum_{l = 1}^b\sum_{i=1}^p s^A_{li}||\mathbf{X}_i||^2 - \eta.$$ Hence, $$\sum_{l = 1}^b\sum_{i=1}^p s^o_{li}||\mathbf{X}_i||^2 -\sum_{l = 1}^b \sum_{i=1}^p s^A_{li}||\mathbf{X}_i||^2 + \eta \geq \mathbf{V}^o - \mathbf{V}^A.$$ $(\mathbf{s}_l^o)_{l\in[b]}$ is feasible in Problem $(9)$ so $\sum_{l = 1}^b \sum_{i=1}^p s^o_{li}||\mathbf{X}_i||^2 \leq \sum_{l = 1}^b\sum_{i=1}^p s^A_{li}||\mathbf{X}_i||^2$, then $\eta \geq \mathbf{V}^o - \mathbf{V}^A$.
\end{proof}
Proposition 3 and corollaries 4 and 5 are still applicable in the case of disjoint block supports and the proofs are almost identical.
\subsection{Algorithm}
\textbf{Algorithm 1} can be easily adapted for the case of groups of PCs having disjoint supports
\medbreak
\begin{algorithm}[H]
\SetAlgoLined
\textbf{Input}: Data matrix $\mathbf{X}$, number of components $(a_l)_{l\in [b]}$, number of variables sought $(k_l)$ and parameter $(\eta_l)_{l\in [b]}$ per group of PCs.\;
\KwResult{Optimal supports for the disjoint supports approximation formulation}

 Initiate the following BLO problem $(\phi)$:
 \begin{align*}
    \max_{\mathbf{s}_i\in \{0,1\}^p;i\in[b]} & \,\sum_{i=1}^b\sum_{j=1}^ps_{ij}||\mathbf{X}_i||^2\\
     s.t.&\,\sum_{j=1}^p s_{ij} \leq k_i,\forall i\in[b],\\
    & \sum_{i=1}^ps_{ij} \leq 1, \forall j \in [p].
\end{align*}
 Compute $(\mathbf{s}^o)$ an optimal solution of $(\phi)$ using a BLO solver\;
 Compute $\mathbf{U}[\mathbf{s}^o]$ by solving the PCA problem for matrix $\mathbf{X}(\mathbf{s}^o)$\;
 \While{$\exists l \in [b]\,||\mathbf{X}(\mathbf{s}_l^t) - \mathbf{U}[\mathbf{s}_l^t]\mathbf{U}[\mathbf{s}_l^t]^T\mathbf{X}(\mathbf{s}_l^t)||^2_F > \eta_l$ }{
  Update $(\phi)$ by adding the constraint $\sum_{i\in\sigma^t}s_{li} \leq |\sigma^t| - 1$\ for all $l\in [b]$ such that $||\mathbf{X}(\mathbf{s}_l^t) - \mathbf{U}[\mathbf{s}_l^t]\mathbf{U}[\mathbf{s}_l^t]^T\mathbf{X}(\mathbf{s}_l^t)|| > \eta_l$;
  Compute $(\mathbf{s}_l^{t+1})_{l\in[b]}$ an optimal solution of $(\phi)$ using a BLO solver\;
  Compute $(\mathbf{U}[\mathbf{s}_l^{t+1}])_{l\in[b]}$ by solving the PCA problem for matrix $(\mathbf{X}(\mathbf{s}_l^{t+1}))_{l\in[b]}$\;
 }
 return the support found by solving $(\phi)$
\caption{Constraints generation algorithm for group of PCs with disjoint supports}
\end{algorithm}
\subsection{Structured Sparse PCA}
In some applications of sparse PCA, additional properties to sparsity are needed either to further improve interpretability or to enhance performances in subsequent classification or regression. One of the notable applications of Structured Sparse PCA is image recognition in which the fact that the features selected need to be adjacent and/or form a particular 2D pattern \citep{bach}. Another notable similar application is protein complex dynamics in which practitioners require that the 3D distance between the features to be limited and more recently, more abstract structures have been considered in genomics based on the interaction among different genes \citep{sspcabio}.\medbreak
The approximation $(7)$ can be adapted to virtually any pattern that can be defined by linear constraints. We propose a general formulation for the approximation Structured Sparse PCA problem following the same approach that lead us to propose problem $(7)$ and problem $(9)$. We then illustrate this formulation for 2D data in Section 3.\medbreak
Consider $\Pi=\{\pi_1,...,\pi_{|\Pi|}\}$ a set of subsets of $[p]$ that represent the patterns that are desired and $b>0$ a number of patterns that would be used to construct the PCs. In practice, patterns could represent a structure that is sought in the data; for instance, patterns could be related genes in genomics or 2D shapes in image recognition. We propose the following exact formulation:
\begin{align*}
\tag{10}
    \max_{\boldsymbol s\in \{0,1\}^p, \mathbf{z}\in \{0,1\}^{|\Pi|},\mathbf{W}\in \mathbb{R}^{p\times a}} & \,tr(\mathbf{W}^T\mathbf{S}\mathbf{X}^T\mathbf{X}\mathbf{S}\mathbf{W})\\
    s.t.&\,s_i \leq \sum_{j\in[\Pi]:i\in\pi_j}z_j,\forall i\in [p],\\
    &\, \sum_{j=1}^{|\Pi|} z_j \leq b,\\
    & \mathbf{W}^T\mathbf{W} = \mathbf{I}_a.
\end{align*}
Following the same steps as the ones we used to build the approximation $(7)$, we propose the following approximation for the structured sparse PCA problem for a given $\eta_{\tau}>0$:
\begin{align*}
\tag{11}
    \max_{\boldsymbol s\in \{0,1\}^p} & \,\sum_{i=1}^ps_i||\mathbf{X}_i||^2\\
    s.t.&\,s_i \leq \sum_{j\in[\Pi]:i\in\pi_j}z_j,\forall i\in [p],\\
    &\, \sum_{j=1}^{|\Pi|} z_j \leq b,\\
    & \forall \pi_j \in \Pi, ||\mathbf{X}(\mathbf{s}^{\pi_j}) - \mathbf{U}[\mathbf{s}^{\pi_j}]\mathbf{U}[\mathbf{s}^{\pi_j}]^T\mathbf{X}(\mathbf{s}^{\pi_j})||_F^2 > \eta_{\tau},\\
    &\Rightarrow z_j = 0.  \\
\end{align*}
This formulation can also be extended to the case in which the patterns are disjoint:
\begin{align*}
\tag{12}
    \max_{\boldsymbol s\in \{0,1\}^p} & \,\sum_{i=1}^ps_i||\mathbf{X}_i||^2\\
    s.t.&\,s_i \leq \sum_{j\in[\Pi]:i\in\pi_j}z_j,\forall i\in [p],\\
    &\, \sum_{j=1}^{|\Pi|} z_j \leq b,\\
    & \forall \pi_j \in \Pi, ||\mathbf{X}(\mathbf{s}^{\pi_j}) - \mathbf{U}[\mathbf{s}^{\pi_j}]\mathbf{U}[\mathbf{s}^{\pi_j}]^T\mathbf{X}(\mathbf{s}^{\pi_j})|| > \eta_{\tau}\\
    &\Rightarrow z_j = 0,  \\
    &\forall i \in [p]\, \sum_{j:i\in \pi^j} z_j \leq 1.\\
\end{align*}
(12) being a special case of  problem $(9)$ (each pattern representing one group of PCs sharing the same support), optimality bounds and algorithm apply in this case for $\eta_l = \eta_{\tau}$. Furthermore, if $|\Pi|$ is not too large, all patterns $\pi^j$ that violate $||\mathbf{X}(\mathbf{s}^{\pi_j}) - \mathbf{U}[\mathbf{s}^{\pi_j}]\mathbf{U}[\mathbf{s}^{\pi_j}]^T\mathbf{X}(\mathbf{s}^{\pi_j})|| \leq \eta_{\tau}$ can be enumerated and eliminated before solving the problem. In this case, all the constraints of Problem (12) can be enumerated and and there is no need to use a cut generation algorithm.
\section{Results}
We aim in this section to illustrate the benefits of Geometric Sparse PCA (GeoSPCA) method in terms of variance explained, sparsity, predictive power and tractability. We also aim at comparing the performances obtained when building all the PCs at once vs. building them iteratively by deflating the data matrix.\medbreak

We first explicit principles for the choice of $a$ and tuning of $\eta$. We then test and compare GeoSPCA in the case of all PCs sharing the same support on real world data sets. We finally test GeoSPCA on a image recognition data set using its structured sparsity version including disjointedness constraints of  problem $(9)$.\medbreak

All tests are conducted computations on an Intel Core i7-8750H CPU at 2.20GHz with 16Gb of RAM on Windows 10 Pro. The solver we used is Gurobi Optimizer 9.1 running with Python 3.6.5.

\subsection{Choosing $\eta$ and $a$}
We base the approach we use to choose $a$ and $\eta$ on the the similarities GeoSPCA has with the classic PCA. \medbreak We first tune $a$ by finding a suitable number of PCs for the classic PCA following a standard procedure. Namely, $a$ could be chosen as the number of PCs beyond which the marginal gain in explained variance is limited (Figure \ref{fig2}a). If $\mathbf{X}$ can be approximated by a matrix $\mathbf{X}'$ of rank $a$, then a sub-matrix $\mathbf{X}(\mathbf{s})$ of $\mathbf{X}$ composed of selected columns of $\mathbf{X}$ can be approximated by a matrix of rank $a$. Although, theoretically, $\mathbf{X}(\mathbf{s})$ could be approximated by a matrix of a rank lower than $a$, this is could be the case in a real data setting but this would involve a special structure in the data; in particular, a subset of the columns of $\mathbf{X}'$ must be orthogonal to the remaining columns, or in other words, independence among variables would be required. \medbreak The parameter $\eta$ is found using Algorithm 2. We start by choosing $\eta^0$ that is large (for example $||\mathbf{X}||_F^2$) and then Algotrithm 2 tightens the values of $\eta$ by updating $\eta^t$. When $\eta^t>\eta(\mathbf{s}^o) +\delta$ for a $\delta > 0$ (see Theorem 4), then $f(\eta^*)$ is lower than the optimal value of (2). When $\eta^t > \eta(\mathbf{s}^o)$ then the optimal solutions of problem $(2)$ are cut from the feasible set of problem $(7)$ (Figure \ref{fig2}.b).

\begin{figure}
\centering
    \includegraphics[ scale=0.18]{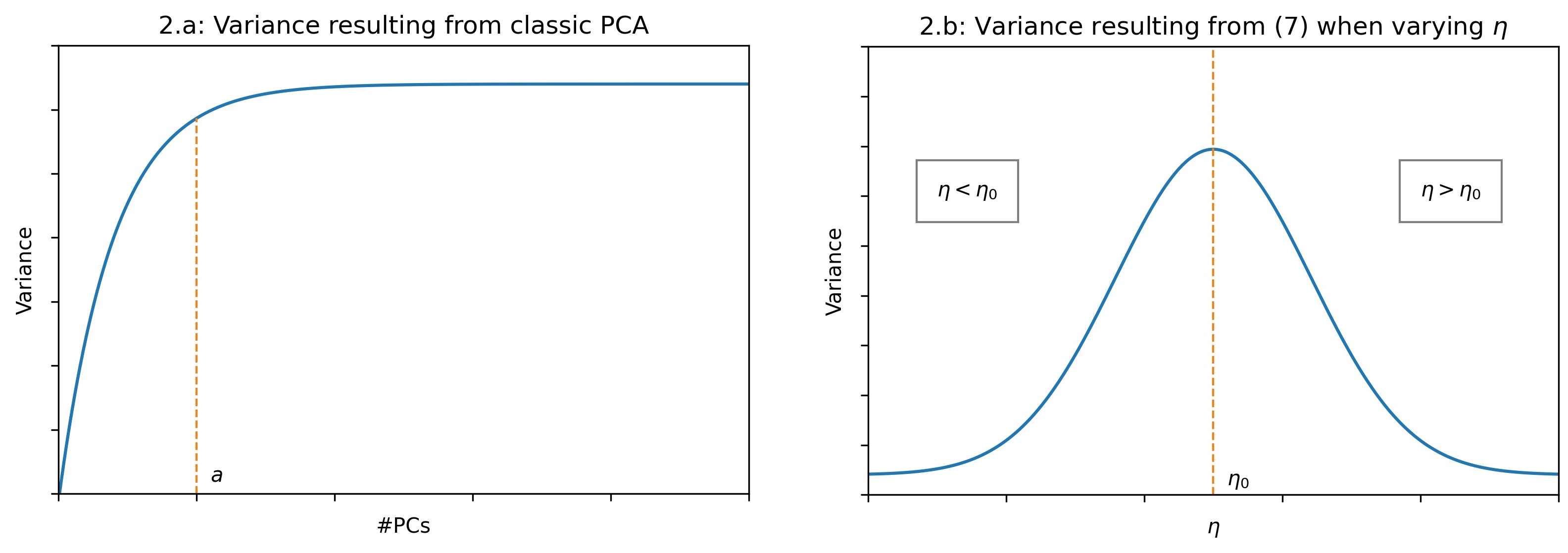}
    \caption{Illustrative figures for the choice of $a$ and $\eta$.}
    \label{fig2}
\end{figure}

\subsection{GeoSPCA with common support}
We consider the problem of maximizing the variance explained from $\mathbf{X}$ using a number $a$ of orthogonal PCs that have a common support of cardinal $k$. We use the formulation $(7)$ to approximate problem $(2)$. We use real world data sets of various natures and sizes to illustrate the benefits of GeoSPCA. In this subsection, we focus on the amount of explained variance, the benefit of building all PCs at once and the tractability depending on the size of the data set, the number of PCs and the cardinality $k$ chosen. We selected publicly available data sets that are widely used in the literature including \textit{Mturk} $(n;p) = (180;500)$ \citep{Mturk} which consists of descriptions of randomly chosen pictures using a bag of words, \textit{Colon} $(n;p) = (62;2,000)$ \citep{Alon} a colon cancer gene expression data set, \textit{Arcene} $(n,p) = (700;10,000)$ a mass-spectrometric data aiming at detecting cancer patterns proposed at the NIPS 2003 Feature Selection Challenge \citep{nips2003} and \textit{CGD} $(n;p) = (286;22,283)$ \citep{wang}, a gene expression data set used to classify breast cancers. 
\medbreak
We compare GeoSPCA to other techniques that control the sparsity by imposing $k$, the number of variables that are used in the sparse model. Comparison with other techniques in which sparsity is induced by regularization is not significant  because achieving the level of sparsity that is achieved while controlling $k$ requires to choose a very large weight for the regularization which skews the objective function and produces ultimately low quality solutions. Since we are also aiming at choosing $k$ higher than 100 and $p$ in 10,000s; we choose \textit{PathSPCA} \citep{aspremont2} to illustrate the benefits of building all PCs at once as its performance is comparable to other techniques extracting iteratively \citep{papailiopoulos13,gpowert}. Since this technique extracts one PC at each iteration, a deflation technique is needed. We choose Schur complement deflation for its empirical performance and ease of use (see \citep{deflation} for a description and a full discussion on deflation techniques). Sparse PCs found with this process are then orthogonalized. We also use a greedy algorithm directly inspired from \citep{moghaddam} and \citep{aspremont2}. We build the support of the solution $\sigma$ by iteratively including the indices that maximize the increase in variace pactured from an iteration to another. We start with an empty set and we iteratively add indices $i$ to $\sigma$ such that:

\begin{align*}
    i\in argmax_{i\in[p]\backslash \sigma}\displaystyle\max_{\mathbf{W}\in \mathbb{R}^{p\times a}} \,& tr(\mathbf{W}^T\mathbf{S}^{\sigma \cup \{i\} }\mathbf{X}^T\mathbf{X}\mathbf{S}^{\sigma \cup \{i\} }\mathbf{W})\\
    s.t.\,& \mathbf{W}^T\mathbf{W} = \mathbf{I}_a.
\end{align*}

\medbreak Greedy approaches have proven to be particularly effective and are often at par with state-of-the-art techniques \citep{aspremont2, ryan,journee,papailiopoulos13}.\medbreak
We report the sorted norms of the columns on $\mathbf{X}$ for the different data sets considered in Figure \ref{fig3} \textit{(Top)}. Formulation $(2)$ sheds a light on one reason the greedy approach performs well. Indeed, as explained in problem $(2)$, the objective function is $\sum_{i=1}^ps_i\sum_{j = 1}^a(\mathbf{X}_i^T\mathbf{U}_j)^2$, so maximizing the variance captured from $k$ features is closely related to the norm of the $k$ columns related to these features. If some columns of $\mathbf{X}$ have a much higher norm than the rest of the columns, they are likely to be selected by the greedy algorithm and their related variables are also likely to be in the support of the optimal solution. This is true for $a=1$ which is the case that is the most studied in the literature and many of the data sets considered in the literature have columns with a norm far exceeding the norms of the remaining columns.\medbreak

\begin{figure}
\centering
    \includegraphics[ scale=0.5]{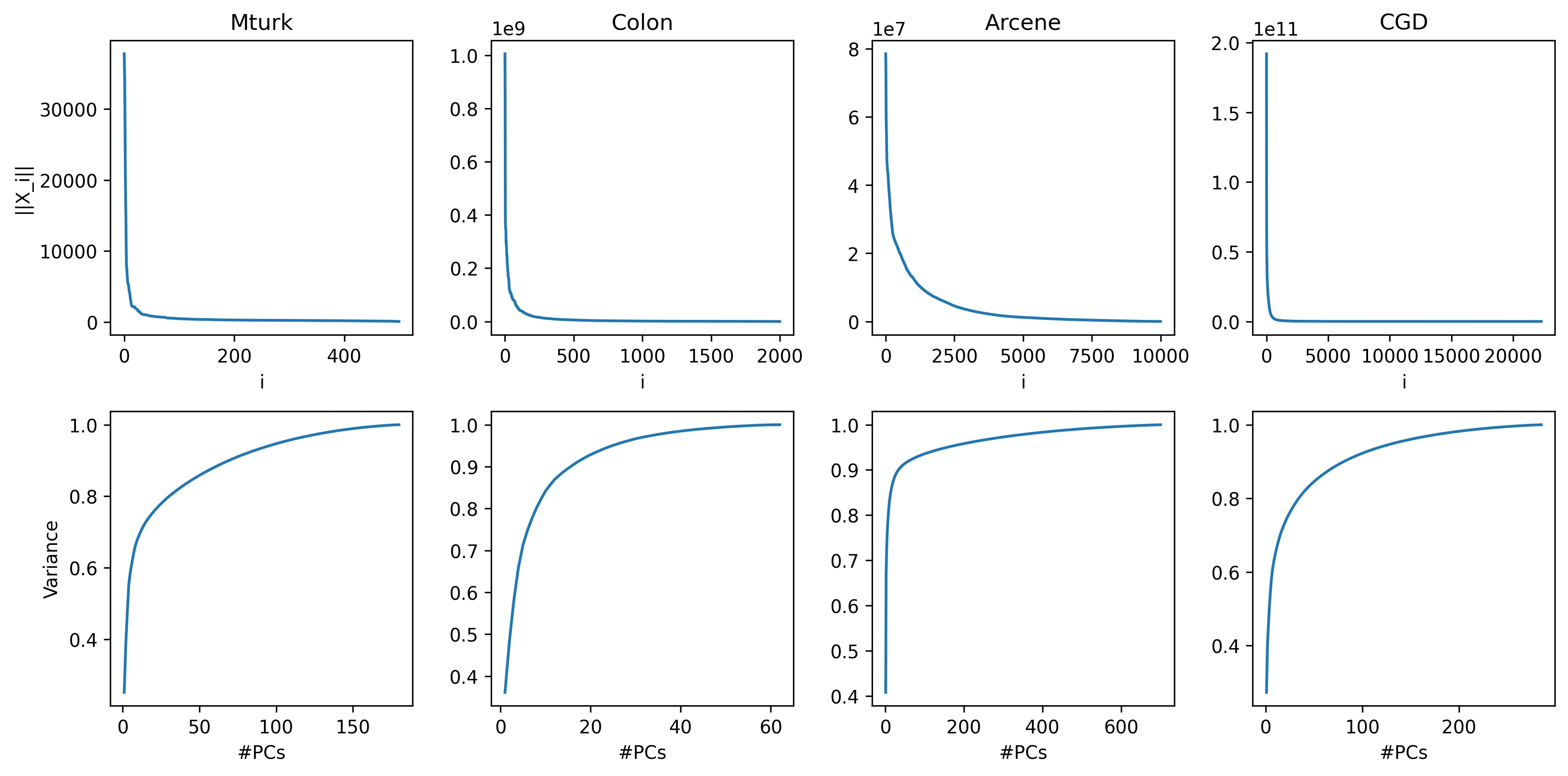}
    \caption{\textit{(Top)} Norms of the columns of $\mathbf{X}$ in decreasing order \textit{(Bottom)} Variance explained by standard PCA depending on the number of PCs }
    \label{fig3}
\end{figure}

We first conduct tests with \textit{PathSPCA} using a constant number of features for the construction of each PC. After generating $a$ PCs, we count the total number of features used $k$ (we note that different PCs may use common features). We then conduct tests using \textbf{Algorithm 1} using $k$ variables. Values of $k$ are chosen for illustration purposes and are also the result of the number of variables that the deflation method produces. Indeed, in all cases studied in this paper, \textit{PathSPCA} often chooses to reuse variables that were used in previously constructed PCs. The cuts are generated as lazy constraints.

\begin{table}

\centering
\caption{Explained Variance captured by each approach, GeoSPCA algorithm computation time, the optimality gap (GAP) and the number of cuts generated. $a$ is the number of PCs used. $k$ is the number of variables in the support. The columns \textit{Deflation}, \textit{Greedy} and \textit{GeoSCPA} indicate the variance obtained using \textit{PathPCA} combined with Schur complement deflation technique, the Greedy algorithm and \textit{GeoSPCA} respectively.  The \textit{GeoSPCA vs. Greedy} column shows the relative improvement in captured variance of \textit{GeoSPCA} vs. the Greedy algorithm. The \textit{Time} column indicates the computation time of \textit{GeoSPCA} Algorithm. GAP indicates the theoretical upper bound of the relative gap between the variance captured by \textit{GeoSPCA} vs. the variance captured by the optimal solution of the exact formulation as per propositions 5 and 11 ($0\%$ gap means that the solution is optimal as per Proposition 11). }

\begin{tabular}{ |c|c|c|c|c|c|c|c|c|c|  }
 \hline
Data set&$a$&k&Deflation&Greedy&GeoSPCA& \shortstack{GeoSPCA \\ vs. \\Greedy} & \shortstack{Time} & GAP & \#cuts \\
 \hline
  \hline

Mturk&8&16&5.62E+3&1.57E+5&1.59E+5&+1.1\%&$<$1s&0\%&56\\
$(n,p) =$&&21&5.6E+3&1.66E+5&1.67E+5&+1\%&$<$1s&1.7\%&44\\
(180;500)&&26&8.73E+3&1.71E+5&1.74E+5&+1.7\%&$<$1s&2.8\%&30\\
&&29&8.7E+3&1.76E+5&1.78E+5&+0.8\%&$<$1s&3.4\%&154\\
&&32&9.06E+3&1.8E+5&1.81E+5&+0.4\%&$<$1s&4\%&1208\\
\hline
Colon&5&11&1.02E+9&4.57E+9&4.79E+9&+4.9\%&$<$1s&1.7\%&120\\

$(n,p) =$ &&12&1.97E+9&4.74E+9&4.92E+9&+3.8\%&$<$1s&3.8\%&85\\
(62;2,000)&&15&2.27E+9&5.41E+9&5.49E+9&+1.5\%&$<$1s&8.4\%&562\\
&&18&2.8E+9&5.9E+9&5.94E+9&+0.6\%&$<$1s&12\%&384\\
&&33&3.81E+9&7.62E+9&7.6E+9&-0.3\%&$<$1s&21.2\%&1214\\
\hline
Arcene&3&14&7.43E+7&8.2E+8&1.02E+9&+24.2\%&$<$1s&0\%&6\\

$(n,p) =$&&22&8.7E+7&1.27E+9&1.5E+9&+18.1\%&$<$1s&0\%&9\\
(700;&&53&1.76E+8&2.74E+9&3.01E+9&+9.7\%&$<$1s&1.6\%&7\\
10,000)&&132&5.9E+8&5.83E+9&6.08E+9&+4.4\%&$<$1s&4.1\%&35\\
&&270&7.95E+8&9.78E+9&9.82E+9&+0.4\%&45s&7.1\%&590\\
\hline
CGD&11&23&5.11E+10&1.84E+12&1.89E+12&+2.7\%&$<$1s&1\%&71\\

(n,p) =&&43&1.09E+11&2.5E+12&2.63E+12&+5.1\%&$<$1s&6.3\%&50\\
(286;&&58&1.91E+11&2.89E+12&3E+12&+3.9\%&6s&10.2\%&166\\
22,283)&&85&2.82E+11&3.45E+12&3.54E+12&+2.9\%&20s&14.4\%&359\\
&&174&3.25E+11&4.64E+12&4.67E+12&+0.6\%&118s&23\%&1308\\

 \hline
\end{tabular}
\label{table1}
\end{table}

We report results in Table \ref{table1}. We first notice that deflation produces significantly lower explained variance vs. GeoSPCA and the greedy in all cases; sometimes by more than an order of magnitude. The illustrates the benefit of constructing all the PCs at once instead of doing so iteratively through deflation. Considering the Greedy approach, GeoSPCA outperforms this method in almost all cases by up to $24.2\%$. When it comes to the variance explained by sparse PCA, even $1\%$ is significant as this implies, depending on the application considered, more lives saved or increased profits. Even theoretically, considering problem $(2)$, when the vectors with the largest norms have a norm far exceeding the remaining columns (Figure 3), improving the explained variance by 1\% is remarkable. We notice also that for Arcene and CDG data sets, the error $\boldsymbol\epsilon$ has a greater norm compared to other data sets with respect to $a$. This signals that the sparse PCA problem is harder to solve which explains the edge GeoSPCA has over a simple greedy approach.\medbreak

Computation time for GeoSPCA did not exceed 2 mins while PathSPCA and the greedy approach needed several hours (and often tens of hours on CGD) to provide a solution (see Table \ref{tabletime}. We note also that GeoSPCA does not need to compute or handle the covariance matrix $\mathbf{X}^T\mathbf{X}$ which also contribute to drastically reduced computation time, especially for the largest instances. $\mathbf{X}^T\mathbf{X}$ has 4.84 10E9 entries that, besides the inherent PathSPCA computation time, needs to be deflated to compute each PC. This already creates complications in the management of the memory capacity of most personal computers.\medbreak
\begin{table}

\centering
\caption{Computation time range for $PathSPCA$ and Greedy Algorithms} 

\begin{tabular}{ |c|c|c|c|c|  }
 \hline
Method &Mturk&Colon&Arcene&CGD \\ 
 \hline

PathSPCA & 1 to 3s & 9 to 11s & 4 to 8 mins & 2 mins to 11 hours \\
Greedy & 2 to 10s & 4 to 20s & 2min to 8 hours & 4 mins to 51 hours \\

 \hline
\end{tabular}
\label{tabletime}
\end{table}
Still considering the same data sets and $a$ and $k$ values, we report in Figure \ref{Values} the evolution of the objective function of (2) in blue and (7) in orange as cuts are iteratively added in Algorithm 2 with respect to the number of cuts for the 3000 first cuts generated. For this experiment, we drop the lazy constraints feature in Gurobi. An optimal solution is found when the lowest value of (7) (in orange) is lower than a value of (2) (in blue). The green dotted line represents the lowest value of (7) achieved.\medbreak

\begin{figure}
\centering
    \includegraphics[ scale=0.13]{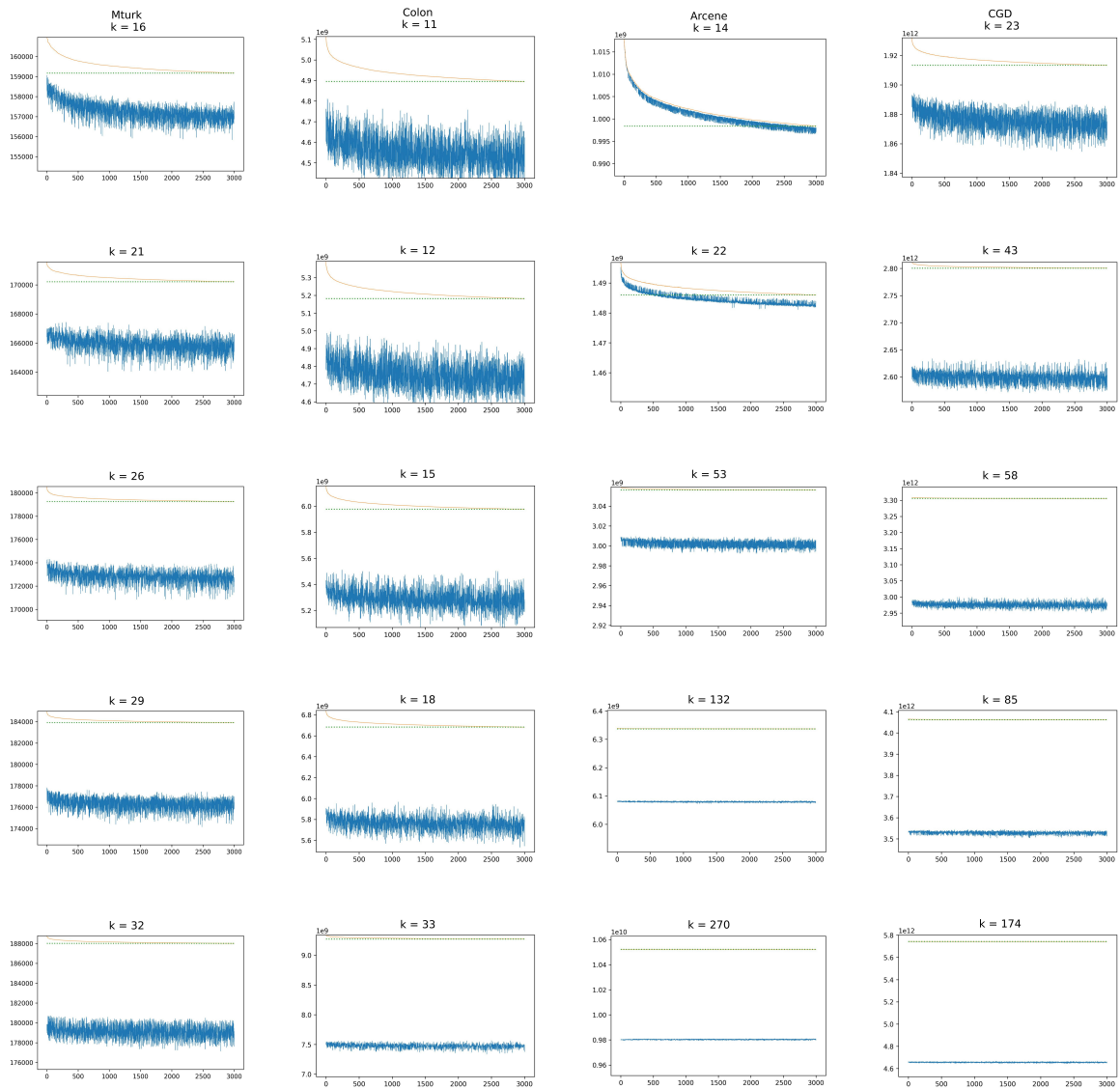}
    \caption{Values of the objective functions of (2) (in blue) and (7) (in orange) in function of the number of cuts added using Algorithm 2.}
    \label{Values}
\end{figure} 
We also report for the same experiment $f(\eta^t)$ with respect to $t$ in Figure \ref{feta}. We chose instances in which the optimal value is found by Algorithm 2 (using Proposition 11 to prove it). For Mturk (left), we notice that Algorithm 2 reduces $\eta^t$ until the optimal solution is found at $t = 2$, then optimal solutions are cut and $f(\eta^t)$ then decreases below the optimal solution value. For Arcene (right), the optimal solution is found at the first iteration, then the optimal solution is cut and $f(\eta^t)$ decreases also to values below the optimal value.

\begin{figure}
\centering
    \includegraphics[ scale=0.18]{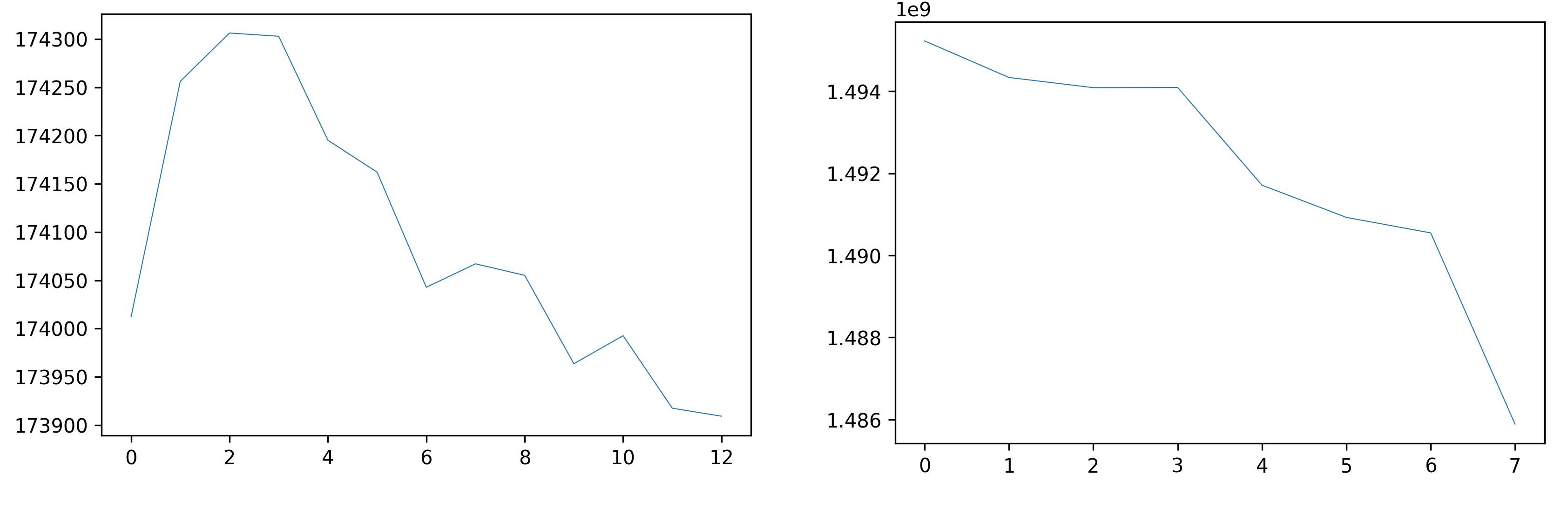}
    \caption{$f(\eta^t)$ with respect to $t$ on the left for Mturk data set ($(a,k) = (8,16)$) and Arcene ($(a,k) = 3,22)$)}
    \label{feta}
\end{figure}

\subsection{Experiments for Structured Sparse PCA}
Imposing a structure in the variables to build the PCs can yield substantial benefits vs. PCA or sparse PCA in a number of applications including genomics and face recognition\citep{sspcabio,bach}. We test GeoSPCA in its structured version (12) (we will call it GeoSSPCA). We chose to use face recognition  for its ease of interpretation to test the method and use the data set \citep{faces}. The data set consists of 2600 cropped pictures of the faces of 50 men and 50 women. For each person, 26 pictures are provided with the different face expressions and lightning configurations. In 12 of the 26 pictures, parts of the face is hidden either by black glasses or scarves (a sample of pictures is provided in Figure\ref{fig5}).
\begin{figure}
\centering
    \includegraphics[ scale=0.4]{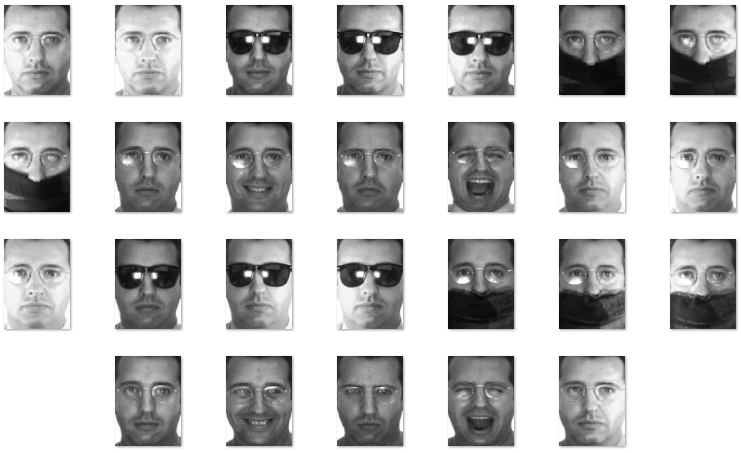}
    \caption{Sample of pictures of the data set chosen}
    \label{fig5}
\end{figure} 
We use as patterns triangles, rectangles and octagons with dimensions varying from 3 to 8 pixels as elements of the patterns set. We filter all patterns $\pi$ that violate the constraint $||\mathbf{X}(\mathbf{s}^{\pi_j}) - \mathbf{U}[\mathbf{s}^{\pi_j}]\mathbf{U}[\mathbf{s}^{\pi_j}]^T\mathbf{X}(\mathbf{s}^{\pi_j})||_F^2 \leq \eta_{\tau}$ (using the same notations as in (12)) so we solve only a BLO problem once as all remaining patterns verify this constraint. We choose $a=3$ and $k=7$ and report an example of PCs constructed while solving (12) in Figure\ref{fig6}. We notice that the shapes and the location of the shape capture intuitive components of the face most of the time (eyes, mouth, shape of the jaw and forehead,...).
\begin{figure}
\centering
    \includegraphics[ scale=0.5]{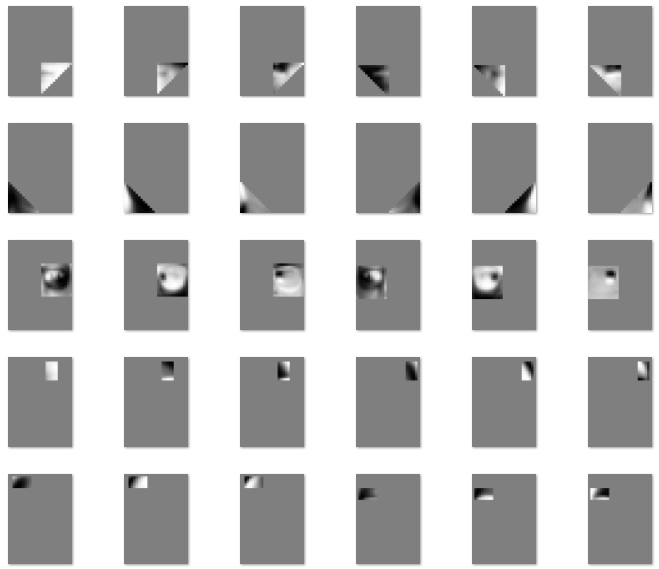}
    \caption{Example of PCs constructed for $k = 10$ and $a = 3$. First row shows 2 halves of the mouth; second row represents the shape of the jaw; third row the eye and the glasses (many participants to the data set were wearing glasses); fourth and fifth rows focus on the shape of the forehead and some on the eyebrows.}
    \label{fig6}
\end{figure}

\medbreak
We follow the same procedure used in \citep{bach}; the resolution of the images is reduced from $165\times 120$ to $38\times 27$; for each person represented in the data set, we use the 14 pictures in which the whole face is visible for training and test on the 12 pictures in which part of the face is hidden. We also use k-NN algorithm to classify the pictures after reducing the dimension using GeoSSPCA and compare to the precision achieved by Structured Sparse PCA\citep{bach} (using the modeling scheme proposed by the authors), and PCA depending on the number of PCs. For GeoSSPCA, we choose $k=5$ and increase $a$ to have a number of PCs varying from 10 to 70. A comparison of the precision is provided in Figure \ref{fig7}.\medbreak 
\begin{figure}
    \centering
    \begin{tikzpicture}
\begin{axis}[
    xlabel={\#PCs},
    ylabel={Precision},
    xmin=9, xmax=71,
    ymin=0, ymax=0.7,
    xtick={10,20,30,40,50,60,70},
    ytick={0,0.1,0.2,0.3,0.4,0.5,0.6,0.7},
    legend pos=north west,
    ymajorgrids=true,
    grid style=dashed,
]

\addplot[
    color=blue,
    mark=square,
    ]
    coordinates {
    (10,0.23416)(20,0.4625)(30,0.4958)(40,0.5116)(50,0.5516)(60,0.5875)(70,0.6166)
    };
    \legend{GeoSSPCA}

\addplot[
    color=orange,
    mark=*,
    ]
    coordinates {
    (10,0.095)(20,0.176)(30,0.233)(40,0.39)(50,0.358)(60,0.43)(70,0.49)
    };
    \legend{GeoSSPCA,SSPCA}

\addplot[
    color=black!30!green,
    mark=triangle,
    ]
    coordinates {
    (10,0.072)(20,0.17)(30,0.241)(40,0.39)(50,0.31)(60,0.351)(70,0.37)
    };
    \legend{GeoSSPCA,SSPCA,PCA}

\end{axis}
\end{tikzpicture}
    \caption{Out-of-sample face recognition precision by method}
    \label{fig7}
\end{figure}
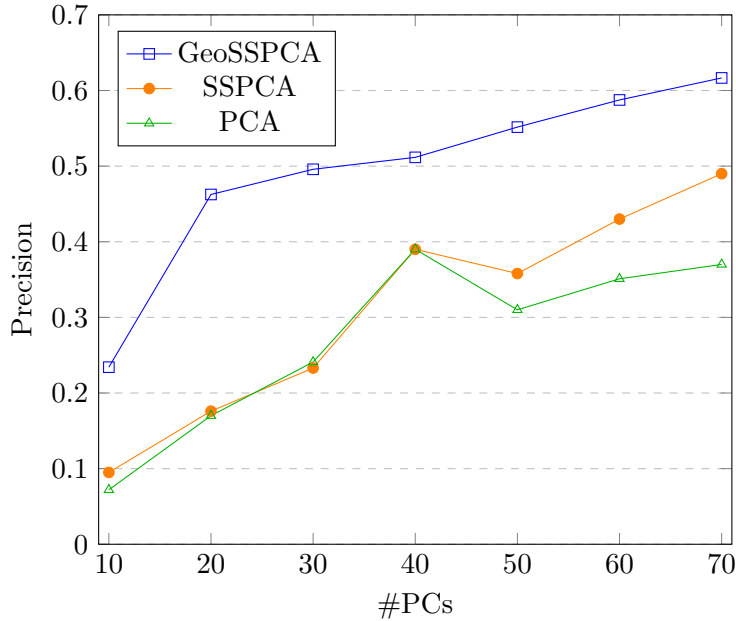
\medbreak
 We note that the patterns can be written also as intersections of half spaces by modifying accordingly (12). However, the polyhedron resulting from the relaxation of the binary constraints leads to computational considerations that go beyond the scope of this paper. 
\subsection{Discussion}
We deduct from the geometric interpretation of GeoSPCA some implications on good practices when using Sparse PCA in terms of choice of $a$ and $k$. First on the choice of $a$, if the matrix $\mathbf{X}$ can be approximated with a matrix $\mathbf{X}'$ of rank $a^*$, it seems unlikely that a submatrix $\mathbf{X}(\mathbf{s})$ of $\mathbf{X}$ would need to be approximated by a matrix of rank higher than $a^*$. Choosing a value $a$ higher than $a^*$ would lead to capture elements that were meant to be ignored such as noise and could then lead to over-fitting. It is however possible to consider values of $a$ that are lower than $a^*$. If $\mathbf{X}$ can be approximated by a matrix of rank $a^*$, a submatrix $\mathbf{X}(\mathbf{s})$ could eventually be approximated by a matrix $\mathbf{X}(\mathbf{s})'$ of rank $b<a$ if $\mathbf{X}(\mathbf{s})'$ has columns that are orthogonal to other columns of $\mathbf{X}'$, or in other words if the variables defined by the support of $\boldsymbol s$ are independent from other features of $\mathbf{X}$. This is true when the supports of the PCs are disjoint for example as we have considered in the Structured Sparse PCA setting in the current section. \medbreak
Regarding the choice of $k$, we notice first that choosing $k\leq a$ leads to a trivial problem as an optimal solution can be constructed using the $k$ columns of $\mathbf{X}$ that have the largest norm. In this case, $k$ columns can be projected into a space of dimension $a$ using $\mathbf{U}[\mathbf{s}]$ with $\mathbf{X}(\mathbf{s}) = \mathbf{U}[\mathbf{s}]\mathbf{U}[\mathbf{s}]^T\mathbf{X}(\mathbf{s})$. On the other hand, $k$ cannot be chosen too big to preserve the putpose of sparse PCA. We provide a summary in Figure \ref{fig8}.

\begin{figure}
\centering
    \includegraphics[ scale=0.45]{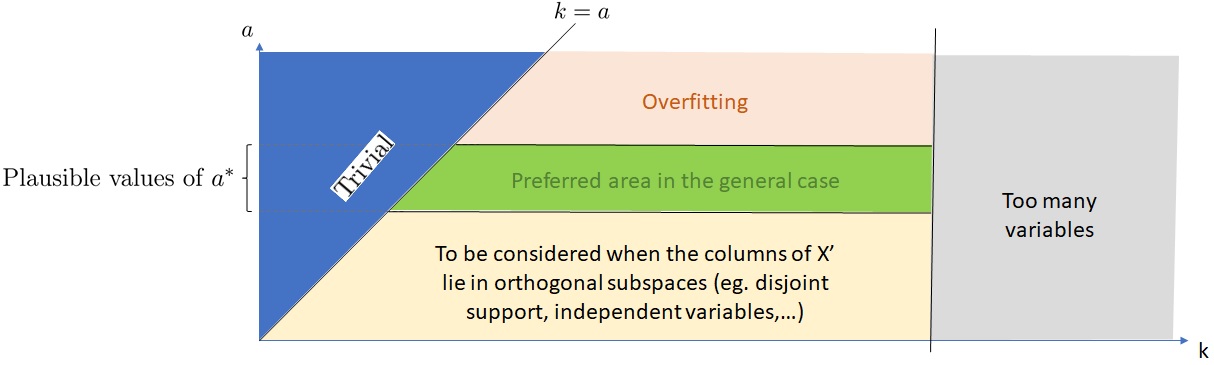}
    \caption{Map for the choice of $a$ and $k$.}
    \label{fig8}
\end{figure} 
\section{Conclusion}
In this paper, we proposed GeoSPCA, a new approach to the sparse PCA problem building on a geometrical interpretation of the problem. We addressed in particular the case in which the PCs share a common support. We then illustrated the versatility of this method to the case in which PCs are organized in groups that have disjoint supports and further extended this adaptation to the Structured Sparse PCA problem. The experiments we conducted showed that GeoSPCA can tackle real world instances with a number of features in the 10,000s exceeding the performance of state-of-the-art approaches while providing high quality solutions in minutes.\medbreak
We believe the method can be further applied to more variants of the Sparse PCA problem and can also be improved especially by generating more efficient cuts at each iteration of the algorithms proposed.

\bibliography{sample}

\end{document}